\def\F{\mathbb{F}}
\def\N{\mathbb{N}}
\DeclareMathOperator{\PG}{PG}
\DeclareMathOperator{\AG}{AG}
\theoremstyle{definition}
\newtheorem{theorem}{Theorem}[section]
\newtheorem{lemma}[theorem]{Lemma}
\newtheorem{definition}[theorem]{Definition}
\newtheorem{remark}[theorem]{Remark}
\newtheorem{corollary}[theorem]{Corollary}
\newtheorem{example}[theorem]{Example}
\newcommand{\comments}[1]{}
\author{Maarten De Boeck\footnote{Address: UGent, Department of Mathematics, Krijgslaan 281-S22, 9000 Gent, Flanders, Belgium. \newline Email address: mdeboeck@cage.ugent.be}}
\title{The small Kakeya sets in $T^{*}_{2}(\mathcal{C})$, $\mathcal{C}$ a conic}
\date{}
\begin{document}
\maketitle

\begin{abstract}
  A Kakeya set in the linear representation $T^{*}_{2}(\mathcal{C})$, $\mathcal{C}$ a non-singular conic, is the point set covered by a set of $q+1$ lines, one through each point of $\mathcal{C}$. In this article we classify the small Kakeya sets in $T^{*}_{2}(\mathcal{C})$. The smallest Kakeya sets have size $\left\lfloor\frac{3q^{2}+2q}{4}\right\rfloor$, and all Kakeya sets with weight less than $\left\lfloor\frac{3(q^{2}-1)}{4}\right\rfloor+q$ are classified: there are approximately $\sqrt{\frac{q}{2}}$ types.
  \paragraph*{Keywords:} linear representation, Kakeya sets, edge-disjoint maximal cliques, bipartite graphs
  \paragraph*{MSC 2010 codes:} 05B25, 51E20, 51E26, 05C35
\end{abstract}

\section{Introduction}
\label{sec:introduction}

A \emph{Kakeya set} in the affine space $\AG(n,q)$ is the point set covered by a set of lines, precisely one in every direction, i.e. one affine line through every point in the hyperplane at infinity. The \emph{finite field Kakeya problem} asks for the size of the smallest Kakeya sets in $\AG(n,q)$. This problem was stated by Wolff in \cite{wol} as the finite field analogue of the classical \emph{Euclidean Kakeya problem}. We refer to \cite[Section 1.3]{tao} for a survey on the Euclidean Kakeya problem. Important results on the finite field Kakeya problem were obtained in \cite{d,dkss,ss}. In \cite{d} it was proved that a Kakeya set contains at least $\binom{q+n-1}{n}$ points.
\par The most studied finite Kakeya sets are those in $\AG(2,q)$. They arise from a set of $q+1$ lines. Not only is the minimal size of these Kakeya sets known, but there is also a classification of the small examples. We refer to \cite{bb,bdms,bm} for the results. The problem we will discuss in this article was inspired by the Kakeya problem for $\AG(2,q)$.

\par A \emph{linear representation} $T^{*}_{2}(\mathcal{K})$ is a point-line geometry which is embedded in $\AG(3,q)$, with $\mathcal{K}$ a point set in $\pi_{\infty}=\PG(2,q)$, the projective plane at infinity of $\AG(3,q)$. It is defined in the following way: the points of $T^{*}_{2}(\mathcal{K})$ are the points of $\AG(3,q)$, the lines of $T^{*}_{2}(\mathcal{K})$ are the lines of $\AG(3,q)$ whose direction is a point of $\mathcal{K}$, and the incidence is the incidence inherited from $\AG(3,q)$.
\par Since $\AG(3,q)$ together with $\pi_{\infty}=\PG(2,q)$ forms a projective geometry $\PG(3,q)$ we will have a look at some substructures of $\PG(3,q)$. A \emph{quadric} of $\PG(n,q)$ is an algebraic variety described by a homogeneous polynomial of degree $2$ in the variables $X_{0},\dots,X_{n}$. It is called \emph{singular} if a projective transformation can be found such that it can be written with less than $n+1$ variables, and \emph{non-singular} otherwise. A quadric in $\PG(2,q)$ is called a \emph{conic}. All non-singular conics are projectively equivalent. In $\PG(3,q)$ there are two types of non-singular quadrics: the \emph{hyperbolic} quadrics and the \emph{elliptic} quadrics. In this article we focus on hyperbolic quadrics. Up to a projective transformation a hyperbolic quadric is described by the equation $X_{0}X_{1}+X_{2}X_{3}=0$. On a hyperbolic quadric there are $2(q+1)$ lines, which can be divided in two groups of $q+1$ lines, called \emph{reguli}, such that two lines from the same regulus are disjoint, and two lines from different reguli meet each other. The set of lines meeting three disjoint lines $\ell_{1},\ell_{2},\ell_{3}$, is a regulus, and the set of lines meeting all lines of this regulus is also a regulus containing $\ell_{1},\ell_{2},\ell_{3}$, which is called the \emph{opposite} regulus. These two reguli form a hyperbolic quadric. More background information on quadrics and reguli can be found in \cite{hir,ht}.
\par In this article we consider the linear representation $T^{*}_{2}(\mathcal{C})$ of a non-singular conic $\mathcal{C}$ in $\pi_{\infty}$, the plane at infinity of $\AG(3,q)$. We will use the notation $T^{*}_{2}(\mathcal{C})$ throughout. We study the Kakeya sets in $T^{*}_{2}(\mathcal{C})$, which we define analogously to Kakeya sets in $\AG(2,q)$: a Kakeya line set in $T^{*}_{2}(\mathcal{C})$ is a set of lines, precisely one through each point of $\mathcal{C}$. The corresponding Kakeya set is the set of points covered by this line set. A. Blokhuis raised the question of finding the minimal size of these Kakeya sets, and to classify the small examples. The Kakeya sets in $T^{*}_{2}(\mathcal{C})$ are similar to the Kakeya sets in $\AG(2,q)$ in the sense that they arise both from a set of $q+1$ lines. 

\par In Section \ref{sec:graphs} we study this problem by looking at a related graph-theoretical problem, so we recall briefly some graph-theoretical concepts. A \emph{graph} $\Gamma$ is a pair $(V,E)$ which consists of a set $V$ of \emph{vertices} and a set $E$ of \emph{edges}, which are $2$-subsets of $V$; since we consider $E$ to be a set, graphs will be considered to be simple. If $\{u,v\}$ is an edge, then the vertices $u$ and $v$ are called \emph{adjacent}. A \emph{clique} is a set of vertices together with the edges connecting them, such that any two vertices in this set are adjacent. A clique is \emph{maximal} if it is not contained in a larger clique.
\par A graph is \emph{complete} if any two vertices are adjacent. The \emph{complete graph} on a vertex set $V$ is denoted by $K_{V}$. By $K_{r}$, $r\in\N$, we denote a complete graph on a set of $r$ vertices, without specifying the vertex set. Note that cliques are complete subgraphs. A graph is called \emph{bipartite} if there is a partition $\{V_{1},V_{2}\}$ of the vertex set $V$ %(so, $V=V_{1}\cup V_{2}$ and $V_{1}\cap V_{2}=\emptyset$) 
such that two vertices in $V_{i}$ are not adjacent, $i=1,2$. It is called \emph{complete bipartite} if there is a partition $\{V_{1},V_{2}\}$ of the vertex set $V$ such that two vertices are adjacent if and only if they belong to different parts of the partition. The complete bipartite graph on the vertex sets $V'$ and $V''$ is denoted by $K_{V',V''}$. By $K_{r,s}$, $r,s\in\N$, we denote a complete bipartite graph on a set of $r$ vertices and a set of $s$ vertices, without specifying the vertex sets.

\section{The graph theory approach}
\label{sec:graphs}

In this section we will discuss a particular kind of graph. This discussion is motivated by the following definition and the two succeding lemmata.

\begin{definition}\label{graphconstruction}
  Let $\mathcal{L}$ be a Kakeya line set in $T^{*}_{2}(\mathcal{C})$. The graph $\Gamma(\mathcal{L})$ is the graph with vertex set $\mathcal{L}$ and such that two vertices are adjacent if the corresponding lines have a point in common.
\end{definition}

\begin{lemma}\label{edgedisjointmaxcliques}
  Let $\mathcal{L}$ be a Kakeya line set in $T^{*}_{2}(\mathcal{C})$. The maximal cliques of $\Gamma(\mathcal{L})$ are edge-disjoint and correspond to subsets of $\mathcal{L}$ through a common point.
\end{lemma}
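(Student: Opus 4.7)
The plan is to exploit the fact that the lines of $\mathcal{L}$ carry distinct directions (the points of $\mathcal{C}$), so any two intersecting lines meet in a single affine point, and then to upgrade pairwise intersection to common-point concurrence using the defining property of a conic: no three of its points are collinear.

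First I would observe that since $\mathcal{L}$ contains exactly one line per point of $\mathcal{C}$, any two distinct lines $\ell, \ell' \in \mathcal{L}$ have distinct directions in $\pi_\infty$. Hence if $\ell$ and $\ell'$ are adjacent in $\Gamma(\mathcal{L})$, they share a unique point of $\AG(3,q)$, which I will call their \emph{meeting point}.

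The main step is the following claim: every clique of $\Gamma(\mathcal{L})$ consists of concurrent lines. For three pairwise adjacent lines $\ell_1, \ell_2, \ell_3 \in \mathcal{L}$, two cases are possible in $\AG(3,q)$: either they share a common point, or they are coplanar but not concurrent (forming a ``triangle''). In the latter case, the plane $\pi$ they span meets $\pi_\infty$ in a line $m$ that contains the three directions of $\ell_1, \ell_2, \ell_3$. But these directions are three distinct points of $\mathcal{C}$, contradicting the fact that a non-singular conic contains no three collinear points. Hence any three pairwise meeting lines of $\mathcal{L}$ share a common point, and an easy induction extends this to any clique. This is the step where I expect all the real content to lie; everything else is bookkeeping.

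Consequently, every maximal clique of $\Gamma(\mathcal{L})$ is the set of \emph{all} lines of $\mathcal{L}$ passing through some common affine point $P$, and conversely such a set is obviously a clique (maximal by the previous paragraph). Finally, for edge-disjointness: if two maximal cliques shared an edge $\{\ell,\ell'\}$, then both $\ell$ and $\ell'$ would pass through the two distinct common points associated to the two cliques; but $\ell$ and $\ell'$ have distinct directions and therefore only one common point, so the two cliques must coincide.
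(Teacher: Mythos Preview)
Your proof is correct and follows essentially the same approach as the paper: both arguments hinge on the fact that a line of $\pi_\infty$ meets $\mathcal{C}$ in at most two points, which forces three pairwise intersecting lines of $\mathcal{L}$ to be concurrent rather than to form a planar triangle. The paper phrases this by noting that $\ell_3$ cannot lie in $\langle\ell_1,\ell_2\rangle$ and hence meets that plane only in $\ell_1\cap\ell_2$, while you invoke the concurrent-or-coplanar dichotomy and rule out the coplanar case; the edge-disjointness argument is then identical.
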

\begin{proof}
  Let $\ell_{1}$, $\ell_{2}$ and $\ell_{3}$ be three lines of $\mathcal{L}$, having pairwise a point in common. The plane $\left\langle\ell_{1},\ell_{2}\right\rangle$ in $\PG(3,q)$ meets $\pi_{\infty}$ in a secant line to $\mathcal{C}$ containing $P_{1}$ and $P_{2}$, the points at infinity of $\ell_{1}$ and $\ell_{2}$. The line $\ell_{3}$ passes through a point of $\mathcal{C}$, different from $P_{1}$ and $P_{2}$, hence meets $\left\langle\ell_{1},\ell_{2}\right\rangle$ in a point. Since $\ell_{3}$ meets both $\ell_{1}$ and $\ell_{2}$, it contains the intersection point $\ell_{1}\cap\ell_{2}$. Consequently, maximal cliques of $\Gamma(\mathcal{L})$ correspond to subsets of the line set $\mathcal{L}$ through a common point.
  \par So, two different maximal cliques $C_{1}$ and $C_{2}$ of $\Gamma(\mathcal{L})$ correspond to two different subsets $\mathcal{L}_{1}$ and $\mathcal{L}_{2}$ of $\mathcal{L}$, which are two line sets, each through a common point, say $P_{1}$ and $P_{2}$. The points $P_{1}$ and $P_{2}$ are different since the maximal cliques $C_{1}$ and $C_{2}$ are different. Hence, $\mathcal{L}_{1}$ and $\mathcal{L}_{2}$ can have at most one line in common. So, $C_{1}$ and $C_{2}$ have at most a vertex in common and are thus edge-disjoint.
\end{proof}

\begin{lemma}\label{size}
  Let $\mathcal{L}$ be a Kakeya line set in $T^{*}_{2}(\mathcal{C})$, and let $k_{i}$ be the number of maximal cliques with $i$ vertices in $\Gamma(\mathcal{L})$. Then, the Kakeya set $\mathcal{K}(\mathcal{L})$ contains $q(q+1)-\sum^{q+1}_{i=1}k_{i}(i-1)$ points.
\end{lemma}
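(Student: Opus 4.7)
The plan is to do a double-counting of affine point--line incidences and then translate the result into the language of maximal cliques via Lemma \ref{edgedisjointmaxcliques}.

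First I would set up the basic enumeration. For each $i\geq 1$, let $n_{i}$ denote the number of points of $\AG(3,q)$ that lie on exactly $i$ lines of $\mathcal{L}$. Since $\mathcal{L}$ consists of $q+1$ affine lines, each containing $q$ affine points, counting incidences gives
\[
  \sum_{i\geq 1} i\cdot n_{i} = q(q+1),
\]
while by definition $\lvert \mathcal{K}(\mathcal{L})\rvert = \sum_{i\geq 1}n_{i}$. Subtracting one equation from the other yields
\[
  \lvert \mathcal{K}(\mathcal{L})\rvert = q(q+1) - \sum_{i\geq 2}(i-1)n_{i}.
\]

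Next I would translate the $n_{i}$ into the numbers $k_{i}$ of maximal cliques. By Lemma \ref{edgedisjointmaxcliques} the maximal cliques of size at least $2$ in $\Gamma(\mathcal{L})$ are in bijection with points of $\AG(3,q)$ lying on at least two lines of $\mathcal{L}$: a point $P$ on exactly $i\geq 2$ lines gives a clique of size $i$, and conversely a maximal clique of size $i\geq 2$ sits through a common point on exactly that many lines (maximality forbids any further line through $P$ from being in $\mathcal{L}$ only vacuously, since any line meeting all the cliques' lines must pass through $P$). Thus $n_{i}=k_{i}$ for every $i\geq 2$.

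Finally, the term $i=1$ contributes $(1-1)k_{1}=0$, so adding it in costs nothing; this harmlessly absorbs the discrepancy that $k_{1}$ counts isolated vertices of $\Gamma(\mathcal{L})$ rather than points $n_{1}$ on exactly one line. Hence
\[
  \lvert \mathcal{K}(\mathcal{L})\rvert = q(q+1) - \sum_{i=1}^{q+1}(i-1)k_{i},
\]
with the upper index $q+1$ justified by $|\mathcal{L}|=q+1$. The only subtle point is the $i=1$ bookkeeping just noted; beyond that, the argument is a direct incidence count combined with the previous lemma, so I do not expect any real obstacle.
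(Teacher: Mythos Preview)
Your proposal is correct and follows essentially the same approach as the paper: both count the $q(q+1)$ point--line incidences and subtract the overcount using the correspondence (from Lemma~\ref{edgedisjointmaxcliques}) between affine points on $i\geq 2$ lines of $\mathcal{L}$ and maximal cliques of size $i$ in $\Gamma(\mathcal{L})$. Your version is simply more explicit about setting up the $n_i$'s, verifying $n_i=k_i$ for $i\geq 2$, and handling the harmless $i=1$ term, all of which the paper leaves implicit in its one-paragraph argument.
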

\begin{proof}
  We know that $|\mathcal{L}|=q+1$ and every line contains $q$ affine points. A point on $i$ lines, which corresponds to a maximal clique with $i$ vertices in $\Gamma(\mathcal{L})$, needs to be counted only once. So, we substract for each of these points, $i-1$ from $q(q+1)$. This proves the result
  \[
     |\mathcal{K}(\mathcal{L})|=q(q+1)-\sum^{q+1}_{i=1}k_{i}(i-1)\;.\qedhere
  \]
\end{proof}

To find small Kakeya sets, we need the value $\sum^{q+1}_{i=1}k_{i}(i-1)$ to be as large as possible, using the notation from Lemma \ref{size}. The next lemmata deal with this question.
\par In this discussion we use some Turán-style graph theoretical results. The next theorem about triangle-free graphs is due to Mantel.

\begin{theorem}[{\cite{man}}]
  A triangle-free graph on $n$ vertices contains at most $\left\lfloor\frac{n^{2}}{4}\right\rfloor$ edges. A triangle-free graph on $n$ vertices with $\left\lfloor\frac{n^{2}}{4}\right\rfloor$ edges, is the complete bipartite graph $K_{\left\lceil\frac{n}{2}\right\rceil,\left\lfloor\frac{n}{2}\right\rfloor}$.
\end{theorem}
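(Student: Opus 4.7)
The plan is to prove both the edge bound and the characterization of the extremal graph by a single induction on $n$, using the fact that in a triangle-free graph the two endpoints of an edge have disjoint neighbourhoods.

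I would start by fixing a triangle-free graph $G$ on $n$ vertices and picking any edge $\{u,v\}$. Because a common neighbour of $u$ and $v$ would complete a triangle, $N(u)\cap N(v)=\emptyset$, so the number of edges incident to $u$ or $v$ (counting $\{u,v\}$ once) is exactly $d(u)+d(v)-1\le n-1$. The induced graph $G-u-v$ is still triangle-free on $n-2$ vertices, so by induction it has at most $\lfloor(n-2)^{2}/4\rfloor$ edges. Adding these two bounds gives
\[
   |E(G)|\le \left\lfloor\tfrac{(n-2)^{2}}{4}\right\rfloor+(n-1)=\left\lfloor\tfrac{n^{2}}{4}\right\rfloor,
\]
where the last equality is a short parity check in the cases $n$ even and $n$ odd (the trivial base cases $n\le 2$ start the induction).

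For the extremal case I would assume $|E(G)|=\lfloor n^{2}/4\rfloor$ and trace back the two inequalities. Both must be tight for every choice of edge $\{u,v\}$: the induction hypothesis then forces $G-u-v$ to be complete bipartite of the right format, and $d(u)+d(v)=n$ together with $N(u)\cap N(v)=\emptyset$ forces $V=N(u)\sqcup N(v)$. Setting $A:=N(v)$ and $B:=N(u)$ yields a partition of $V$ with $u\in A$, $v\in B$; and any edge inside $A$ or inside $B$ would, together with $v$ or $u$, give a triangle. Hence $G$ is bipartite with parts $A,B$, so $|E(G)|\le |A|\,|B|$. Since $|A|+|B|=n$ and $|A||B|\ge\lfloor n^{2}/4\rfloor$, the only possibility is $\{|A|,|B|\}=\{\lceil n/2\rceil,\lfloor n/2\rfloor\}$, and all edges between $A$ and $B$ are present, i.e. $G=K_{\lceil n/2\rceil,\lfloor n/2\rfloor}$.

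The only delicate point, which I expect to be the main obstacle, is the parity bookkeeping: checking that $\lfloor(n-2)^{2}/4\rfloor+(n-1)=\lfloor n^{2}/4\rfloor$ correctly for both parities of $n$, and making sure that in the equality discussion the structural conclusion covers the odd case (where $\lfloor n^{2}/4\rfloor$ is strictly less than $n^{2}/4$, so one might worry that the bound is not tight enough to pin down the extremal graph). Both issues dissolve once one notes that $\lfloor n^{2}/4\rfloor=\lceil n/2\rceil\lfloor n/2\rfloor$, which is exactly the edge count of $K_{\lceil n/2\rceil,\lfloor n/2\rfloor}$ and the maximum of $|A||B|$ subject to $|A|+|B|=n$.
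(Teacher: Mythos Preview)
The paper does not include its own proof of this statement; it is Mantel's classical theorem, quoted with a citation to \cite{man} and used as a black box (as is the Hanson--Toft stability result immediately after it). So there is nothing to compare against.

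Your argument is a standard and correct proof of Mantel's theorem. The key step, that $N(u)\cap N(v)=\emptyset$ for an edge $\{u,v\}$ gives $d(u)+d(v)\le n$, combined with the identity $\left\lfloor(n-2)^{2}/4\right\rfloor+(n-1)=\left\lfloor n^{2}/4\right\rfloor$, cleanly closes the induction for the edge bound. For the extremal characterization, your direct argument via $V=N(u)\sqcup N(v)$ already forces $G$ to be bipartite with parts $A=N(v)$, $B=N(u)$, and then $|A|\,|B|\ge |E(G)|=\left\lfloor n^{2}/4\right\rfloor$ together with $|A|+|B|=n$ pins down the sizes and forces completeness; you do not actually need to invoke the inductive description of $G-u-v$ that you mention, so that sentence can be dropped.
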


This result was later generalised to the famous theorem by Turán on $K_{r}$-free graphs (\cite{tur}). A stability result for Mantel's theorem was proved by Hanson and Toft.

\begin{theorem}[{\cite{hto}}]
  A triangle-free graph on $n$ vertices with $\left\lfloor\frac{n^{2}}{4}\right\rfloor-l$ edges, $l<\left\lfloor\frac{n}{2}\right\rfloor-1$, is a bipartite graph.
\end{theorem}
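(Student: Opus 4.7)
The plan is to argue by contradiction. Suppose $G$ is triangle-free on $n$ vertices with $\lfloor n^{2}/4\rfloor-l$ edges and $l<\lfloor n/2\rfloor-1$, but $G$ is not bipartite. Then $G$ contains an odd cycle, and since $G$ is triangle-free, a shortest odd cycle $C$ has length $2k+1\geq 5$. The strategy is to decompose the edges of $G$ relative to $C$ and use Mantel's theorem on the complement of $V(C)$ to derive an upper bound on $e(G)$ that contradicts the hypothesis.

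The key structural input would be the claim that every $v\notin V(C)$ has at most two neighbours on $C$. Indeed, suppose $v$ is adjacent to three distinct vertices $v_{a},v_{b},v_{c}$ on $C$. Then the three arcs of $C$ between these neighbours, each closed up through $v$, yield three cycles whose lengths sum to $2k+7$. Since this total is odd, an odd number of these three cycles are themselves odd. In both cases (one odd, or all three odd), triangle-freeness forces each of the three cycles to have length at least $4$ while the minimality of $C$ forces each odd one to have length at least $2k+1$; these two constraints are easily seen to be incompatible with the sum being exactly $2k+7$ whenever $k\geq 2$. (A refinement of the same parity argument shows that two neighbours of $v$ on $C$ must lie at distance $2$ along $C$, but this sharper statement is not needed for the edge count.)

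Using this, I would partition the edges of $G$ into three groups and bound each: the $2k+1$ edges inside $C$; the edges between $C$ and $V(G)\setminus V(C)$, bounded by $2(n-2k-1)$ thanks to the previous claim; and the edges induced on $V(G)\setminus V(C)$, bounded by $\lfloor(n-2k-1)^{2}/4\rfloor$ by Mantel's theorem applied to this still triangle-free induced subgraph. Summing these contributions gives $e(G)\leq\lfloor(n-2k-1)^{2}/4\rfloor+2n-2k-1$. A quick monotonicity check shows that, for $k$ in the admissible range, this bound is decreasing in $k$, so the worst case is $2k+1=5$. A short calculation, treating $n$ even and $n$ odd separately, reduces the resulting bound to $e(G)\leq\lfloor n^{2}/4\rfloor-\lfloor n/2\rfloor+1$, contradicting $e(G)\geq\lfloor n^{2}/4\rfloor-\lfloor n/2\rfloor+2$ which is forced by $l\leq\lfloor n/2\rfloor-2$.

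The main obstacle, in my view, is not the conceptual structure of the argument but the final floor-function bookkeeping: one must verify the key numerical inequality simultaneously for $n$ even and $n$ odd, and confirm the monotonicity in $k$ so that the $5$-cycle case really does dominate. The combinatorial heart of the proof, the two-neighbours lemma, is otherwise a clean consequence of the parity of $2k+7$ combined with triangle-freeness and the minimality of $C$.
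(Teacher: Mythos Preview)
The paper does not supply its own proof of this theorem; it is quoted from Hanson and Toft and then used as a black box (via the restatement in Corollary~\ref{bijnabipartiet}) inside the proof of Lemma~\ref{mainlemma}. So there is no in-paper argument to compare your proposal against.

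That said, your sketch is the standard proof of this stability result and it is correct. Two small points are worth making explicit when you write it out in full. First, a shortest odd cycle is automatically induced (a chord would split it into two cycles of lengths $p+1$ and $q+1$ with $p+q=2k+1$, one of which is odd and strictly shorter), so the edge count inside $V(C)$ really is exactly $2k+1$. Second, the monotonicity in $k$ is immediate once you set $f(k)=(2k+1)+2(n-2k-1)+\lfloor(n-2k-1)^{2}/4\rfloor$ and note $f(k)-f(k+1)=n-2k\geq 1$; at $k=2$ one gets $f(2)=\lfloor n^{2}/4\rfloor-\lfloor n/2\rfloor+1$ in both parities, so the calculation is tight and the floor bookkeeping you flag as the main obstacle is in fact painless. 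Your two-neighbours lemma is also clean: the three cycles through $v$ have lengths summing to $2k+7$, each is at least $4$ by triangle-freeness, and each odd one is at least $2k+1$ by minimality, so ``one odd'' forces the sum to be at least $2k+9$ and ``three odd'' forces it to be at least $6k+3$, both impossible for $k\geq 2$.
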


Note that all maximal cliques of a triangle-free graph are trivially edge-disjoint. We restate the previous theorem in the following way:

\begin{corollary}\label{bijnabipartiet}
  Let $\Gamma$ be a triangle-free graph on $n$ vertices.
  \begin{enumerate}
    \item If $n$ is even, and $\Gamma$ contains $\frac{n^{2}}{4}-\varepsilon$ edges, $\varepsilon<\frac{n}{2}-1$, then $\Gamma$ arises from a complete bipartite graph $K_{\frac{n}{2}+\delta,\frac{n}{2}-\delta}$ by removing $\varepsilon-\delta^{2}$ edges, for some $\delta\leq\sqrt{\varepsilon}$, $\delta\in\N$.
    \item If $n$ is odd, and $\Gamma$ contains $\frac{n^{2}-1}{4}-\varepsilon$ edges, $\varepsilon<\frac{n-1}{2}-1$, then $\Gamma$ arises from a complete bipartite graph $K_{\frac{n+1}{2}+\delta,\frac{n-1}{2}-\delta}$ by removing $\varepsilon-\delta^{2}-\delta$ edges, for some $\delta\leq\sqrt{\varepsilon+\frac{1}{4}}-\frac{1}{2}$, $\delta\in\N$.
  \end{enumerate}
\end{corollary}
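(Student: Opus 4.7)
The plan is to invoke the Hanson--Toft theorem directly and then carry out a parameterised counting argument on the bipartition sizes.

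First I would check that in both cases the hypotheses of the Hanson--Toft theorem are satisfied, so that $\Gamma$ is bipartite. In case (1), $n$ is even so $\lfloor n/2\rfloor-1=n/2-1$, and we are assuming $\varepsilon<n/2-1$; in case (2), $n$ is odd so $\lfloor n/2\rfloor-1=(n-1)/2-1$, which is again the bound assumed on $\varepsilon$. Thus in either case Hanson--Toft guarantees a bipartition $\{V_{1},V_{2}\}$ of the vertex set with $a:=|V_{1}|\geq|V_{2}|=:b$ and $a+b=n$.

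Next I would parameterise the bipartition in the form suggested by the statement. In case (1) write $a=\frac{n}{2}+\delta$, $b=\frac{n}{2}-\delta$ for some $\delta\in\mathbb{N}$; in case (2) write $a=\frac{n+1}{2}+\delta$, $b=\frac{n-1}{2}-\delta$ for some $\delta\in\mathbb{N}$. Since $\Gamma$ is a subgraph of $K_{a,b}$, its number of edges is at most $ab$. A direct computation gives $ab=\frac{n^{2}}{4}-\delta^{2}$ in case (1) and $ab=\frac{n^{2}-1}{4}-\delta^{2}-\delta$ in case (2). Comparing with the given edge count $\frac{n^{2}}{4}-\varepsilon$ respectively $\frac{n^{2}-1}{4}-\varepsilon$ shows that $\delta^{2}\leq\varepsilon$, respectively $\delta^{2}+\delta\leq\varepsilon$, which yield the claimed bounds $\delta\leq\sqrt{\varepsilon}$ and $\delta\leq\sqrt{\varepsilon+\tfrac{1}{4}}-\tfrac{1}{2}$.

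Finally, the number of edges that must be removed from $K_{a,b}$ in order to obtain $\Gamma$ is $ab$ minus the edge count of $\Gamma$, which equals $\varepsilon-\delta^{2}$ in case (1) and $\varepsilon-\delta^{2}-\delta$ in case (2). This gives the desired description. There is no real obstacle here: the only thing to be careful about is the WLOG choice making $\delta\geq 0$ and the observation that the sizes $a,b$ are forced to be non-negative integers (so $\delta\in\mathbb{N}$ automatically, once one has verified that the upper bounds on $\delta$ keep $b\geq 0$, which follows from $\varepsilon<\lfloor n/2\rfloor-1$).
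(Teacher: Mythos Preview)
Your proposal is correct and matches the paper's approach: the paper presents this corollary explicitly as a restatement of the Hanson--Toft theorem (with no separate proof given), and your argument is precisely the unpacking of that restatement---apply Hanson--Toft to get bipartiteness, parameterise the part sizes, and compute.
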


We present the main lemma of this section, which generalises the previous result.

\begin{lemma}\label{mainlemma}
  Let $\Gamma$ be a graph on $n$ vertices with edge-disjoint maximal cliques, and let $k_{i}$ be the number of maximal cliques with $i$ vertices in $\Gamma$.
  \begin{enumerate}
    \item If $n$ is even, and $\sum^{n}_{i=1}k_{i}(i-1)>\frac{n^{2}}{4}-\frac{n}{2}+1$, then $\Gamma$ arises from a complete bipartite graph $K_{\frac{n}{2}+\delta,\frac{n}{2}-\delta}$ by removing $\varepsilon-\delta^{2}$ edges, for some $\delta\leq\sqrt{\varepsilon}$, $\delta\in\N$, with $\varepsilon=\frac{n^{2}}{4}-\sum^{n}_{i=1}k_{i}(i-1)$.
    \item If $n$ is odd, and $\sum^{n}_{i=1}k_{i}(i-1)>\frac{n^{2}-1}{4}-\frac{n-1}{2}+1$, then $\Gamma$ arises from a complete bipartite graph $K_{\frac{n+1}{2}+\delta,\frac{n-1}{2}-\delta}$ by removing $\varepsilon-\delta^{2}-\delta$ edges, for some $\delta\leq\sqrt{\varepsilon+\frac{1}{4}}-\frac{1}{2}$, $\delta\in\N$, with $\varepsilon=\frac{n^{2}-1}{4}-\sum^{n}_{i=1}k_{i}(i-1)$.
  \end{enumerate}
  In both cases $\Gamma$ is a bipartite graph.
\end{lemma}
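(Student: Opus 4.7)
The plan is to reduce the statement to Corollary~\ref{bijnabipartiet} by constructing a well-chosen triangle-free subgraph of $\Gamma$. For each maximal clique $C$ of $\Gamma$ I pick a spanning tree $T_{C}\subseteq C$ and set $T:=\bigcup_{C}T_{C}$. Because the maximal cliques are edge-disjoint, the three edges of any triangle of $\Gamma$ lie in a single maximal clique, so a triangle of $T$ would be a triangle inside some $T_{C}$, which is impossible; hence $T$ is a triangle-free graph on $n$ vertices with $|E(T)|=\sum_{i}k_{i}(i-1)$. The hypothesis translates to $\varepsilon:=\lfloor n^{2}/4\rfloor-|E(T)|<\lfloor n/2\rfloor-1$ (and analogously in the odd case), so Corollary~\ref{bijnabipartiet} applies to $T$ and yields that $T$ is bipartite with parts $V_{1},V_{2}$ of sizes $\lceil n/2\rceil+\delta$ and $\lfloor n/2\rfloor-\delta$, arising from $K_{V_{1},V_{2}}$ by deleting $\varepsilon-\delta^{2}$ edges (respectively $\varepsilon-\delta^{2}-\delta$).

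To upgrade this structural statement from $T$ to $\Gamma$ it will suffice to prove that every maximal clique of $\Gamma$ has size at most $2$, whence $\Gamma=T$ and the structure is inherited. I argue by contradiction: suppose some maximal clique $C$ has $|C|\geq 3$. Since $T_{C}$ is a tree embedded inside the bipartite graph $T$, both $A:=V(C)\cap V_{1}$ and $B:=V(C)\cap V_{2}$ are non-empty, and after relabelling I may assume $|A|\geq 2$; pick distinct $u,v\in A$. The edge $uv$ lies in $\Gamma$ (by cliqueness) but not in $T$. Now I replace $T_{C}$ by an alternative spanning tree $T_{C}'$ of $C$ that does contain $uv$---such a tree is obtained from $T_{C}\cup\{uv\}$ by removing any other edge of the resulting cycle---and form $T':=(T\setminus T_{C})\cup T_{C}'$, which is again triangle-free with the same edge count $\sum_{i}k_{i}(i-1)$. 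A second application of Corollary~\ref{bijnabipartiet} forces $T'$ to be bipartite as well.

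The key point is that $T$ and $T'$ share the spanning subgraph $T^{*}:=T\setminus T_{C}=T'\setminus T_{C}'$, whose bipartition is unique per connected component up to swap; so if $u$ and $v$ lie in the same component of $T^{*}$, they must occupy the same side of the bipartition of both $T$ and $T'$, contradicting $uv\in T'$. The main obstacle is therefore to verify that $u$ and $v$ do share a component of $T^{*}$. This I plan to derive from the near-completeness of $T$: since $T\subseteq K_{V_{1},V_{2}}$ misses only $\varepsilon-\delta^{2}$ edges, the pair $u,v\in V_{1}$ enjoys at least $|V_{2}|-(\varepsilon-\delta^{2})-|V_{2}\cap V(C)|$ common $T$-neighbours in $V_{2}\setminus V(C)$, each yielding a length-$2$ path in $T^{*}$; a short count using $\varepsilon<\lfloor n/2\rfloor-1$ shows this number is strictly positive, and the extreme case $|B|\geq 2$ is treated by the symmetric argument applied to a pair $u',v'\in B$ with the roles of $V_{1}$ and $V_{2}$ interchanged. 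Once the clique-size bound is secured, $\Gamma$ inherits the complete bipartite structure already established for $T$, proving the lemma in both parities.
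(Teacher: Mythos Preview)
Your approach is genuinely different from the paper's. Both begin by passing to a triangle-free subgraph on the same vertex set and invoking Corollary~\ref{bijnabipartiet}, but the paper replaces each large clique $C_i$ by a \emph{balanced complete bipartite} graph $K_{V_i',V_i''}$ (so that $C(\overline\Gamma)\geq C(\Gamma)$, with the slack tracked explicitly) and then rules out large cliques by a direct edge count: if a clique of size $2s$ (or $2s+1$) survived, its two halves would force at least $(s-1)\bigl(|\overline V_2|-s\bigr)$ deletions from $K_{\overline V_1,\overline V_2}$, already exceeding the budget. You instead replace each clique by a \emph{spanning tree}, which makes $|E(T)|=C(\Gamma)$ exact and lets you run an elegant swap argument (replace $T_C$ by a tree containing a same-side edge $uv$ and reapply the corollary). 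The reduction to showing that $u$ and $v$ lie in a common component of $T\setminus T_C$ is correct.

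There is, however, a real gap in the ``short count''. Your lower bound for common $T$-neighbours of $u,v$ in $V_2\setminus B$ is $|V_2|-(\varepsilon-\delta^2)-|B|$; for this to be positive one needs $|B|\leq 1+\delta(\delta-1)$, so for $\delta=0$ only $|B|=1$ is handled. You propose to treat $|B|\geq 2$ by the symmetric estimate $|V_1|-(\varepsilon-\delta^2)-|A|$, but that in turn needs $|A|\leq 1+\delta(\delta+1)$. Hence the case $|A|\geq 2$ and $|B|\geq 2$ (for instance $|C|=4$ split as $2+2$, $\delta=0$, $\varepsilon=\lfloor n/2\rfloor-2$) escapes both bounds. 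The repair is to sharpen the count: since $T_C\subseteq T$ is bipartite with parts $A,B$, exactly $|A|\,|B|-(|C|-1)=(|A|-1)(|B|-1)$ of the $\varepsilon-\delta^2$ missing edges of $K_{V_1,V_2}$ lie inside $K_{A,B}$ and therefore are \emph{not} missing edges from $\{u,v\}$ to $V_2\setminus B$. Subtracting these, the common-neighbour count is at least
\[
|V_2|-|B|-(\varepsilon-\delta^2)+(|A|-1)(|B|-1)\ \geq\ 1+\delta(\delta-1)+(|A|-2)(|B|-1)\ >\ 0
\]
whenever $|A|\geq 2$ (and analogously for $n$ odd). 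With this refinement your argument goes through.
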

\begin{proof}
  For a graph $G$ on $n$ vertices, we denote the value $\sum^{n}_{i=1}k^{G}_{i}(i-1)$ by $C(G)$, with $k^{G}_{i}$ the number of maximal cliques with $i$ vertices in $G$.
  \par Denote the vertex set of $\Gamma$ by $V$. Let $C_{1},\dots,C_{m}$ be the maximal cliques of $\Gamma$ with at least three vertices, and let $V_{i}$ be the vertex set of $C_{i}$. We know that $|V_{i}\cap V_{j}|\leq 1$ if $i\neq j$. Also, if $x$ and $y$ are adjacent vertices in $\Gamma$, then there is a maximal clique $C_{i}$ such that $x,y\in V_{i}$.
  \par For each $i=1,\dots,m$, we choose a partition of $V_i$ with two subsets $V'_i$ and $V"_i$ such that the sizes $|V'_{i}|$ and $|V''_{i}|$ differ at most one. Let $\overline{\Gamma}$ be the graph on the vertex set $V$ with the edges of the complete bipartite subgraphs $K_{V'_{i},V''_{i}}$ and the edges of $\Gamma$ that are not contained in any of the maximal cliques $C_{i}$. In other words, the graph $\overline{\Gamma}$ is obtained by replacing the edges of the maximal cliques $C_{1},\dots,C_{m}$ (complete subgraphs $K_{V_{i}}$) in $\Gamma$ by the edges of the complete bipartite subgraphs $K_{V'_{i},V''_{i}}$, $i=1,\dots,m$.
  \par We note that $\overline{\Gamma}$ arises from $\Gamma$ by deleting $\sum_{s\geq1}k_{2s}s(s-1)+\sum_{s\geq1}k_{2s+1}s^{2}$ edges. We denote the set of edges that are in $\Gamma$ but not in $\overline{\Gamma}$ by $\mathcal{E}$. Moreover, we know that
  \[
    C(\overline{\Gamma})-C(\Gamma)=\sum_{s\geq1}k_{2s}(s-1)^{2}+\sum_{s\geq1}k_{2s+1}s(s-1)\geq0\;.
  \]
  Consequently,
  \[
    C(\overline{\Gamma})\geq C(\Gamma)>\left\lfloor\frac{n^{2}}{4}\right\rfloor-\left\lfloor\frac{n}{2}\right\rfloor+1\;.
  \]
  \par The graph $\overline{\Gamma}$ is triangle-free since any triangle of $\Gamma$ is contained in a unique maximal clique of size at least $3$. Hence, $C(\overline{\Gamma})$ equals the number of edges in $\overline{\Gamma}$, which we denote by $\left\lfloor\frac{n^{2}}{4}\right\rfloor-\varepsilon'$. Note that
  \[
    \varepsilon'+C(\overline{\Gamma})=\left\lfloor\frac{n^{2}}{4}\right\rfloor<C(\Gamma)+\left(\left\lfloor\frac{n}{2}\right\rfloor-1\right)\;,
  \]
  hence
  \[
    \varepsilon'<\left(\left\lfloor\frac{n}{2}\right\rfloor-1\right)-\left(C(\overline{\Gamma})-C(\Gamma)\right)\;. 
  \]
  \par We can apply Corollary \ref{bijnabipartiet} on $\overline{\Gamma}$ since it is triangle-free and we find that $\overline{\Gamma}$ is a bipartite graph, arising from the bipartite graph $K_{\overline{V}_{1},\overline{V}_{2}}$ by the removal of few edges; we denote this set of removed edges by $\mathcal{E}'$. Here, $\overline{V}_{1}$ and $\overline{V}_{2}$ form a partition of the vertex set $V$. If $n$ is even, then $|\overline{V}_{1}|=\frac{n}{2}-\delta$, $|\overline{V}_{2}|=\frac{n}{2}+\delta$, $\delta\in\N$ and the number of removed edges equals
  \[
    |\mathcal{E}'|=\varepsilon'-\delta^{2}<\frac{n}{2}-1-\delta^{2}-\left(C(\overline{\Gamma})-C(\Gamma)\right)\;.
  \]
  If $n$ is odd, then $|\overline{V}_{1}|=\frac{n+1}{2}+\delta$, $|\overline{V}_{2}|=\frac{n-1}{2}-\delta$, $\delta\in\N$ and the number of removed edges equals $|\mathcal{E}'|=\varepsilon'-\delta^{2}-\delta<\frac{n-1}{2}-1-\delta^{2}-\delta-\left(C(\overline{\Gamma})-C(\Gamma)\right)$.
  \par Let $x$ and $y$ be two vertices in $V_{i}$ such that $x,y\in\overline{V}_{j}$, $j=1,2$. We know that the edge $(x,y)$ is in $\mathcal{E}$. So, the vertices $x$ and $y$ are both contained in $V'_{i}$ or are both contained in $V''_{i}$. Consequently, $V_{i}\cap\overline{V}_{j}\subseteq V'_{i}$ or $V_{i}\cap\overline{V}_{j}\subseteq V''_{i}$, $j=1,2$. Since $V_{i}=(V_{i}\cap\overline{V}_{1})\cup(V_{i}\cap\overline{V}_{2})$ and both $V'_{i}$ and $V''_{i}$ are nonempty, the partitions $\{V_{i}\cap\overline{V}_{1},V_{i}\cap\overline{V}_{2}\}$ and $\{V'_{i},V''_{i}\}$ are equal. Without loss of generality we can assume $V_{i}\cap\overline{V}_{1}=V'_{i}$ and $V_{i}\cap\overline{V}_{2}=V''_{i}$.
  \par So, $\Gamma$ arises from the bipartite graph $K_{\overline{V}_{1},\overline{V}_{2}}$ by first deleting the edges of $\mathcal{E}'$ (necessarily connecting a vertex in $\overline{V}_{1}$ and a vertex in $\overline{V}_{2}$), and then adding the edges of $\mathcal{E}$, each of which connects two vertices of $\overline{V}_{1}$ or two vertices of $\overline{V}_{2}$.
  \par We distinguish now between two cases. First we assume $n$ is even. We consider the maximal clique $C_{i}$ of $\Gamma$. Two vertices in $V'_{i}$ cannot be adjacent to the same vertex of $\overline{V}_{2}\setminus V''_{i}$ since the maximal cliques of $\Gamma$ are edge-disjoint. If $|V_{i}|=2s$, and thus $|V'_{i}|=|V''_{i}|=s$, then $\mathcal{E}'$ should contain at least $(s-1)\left(\frac{n}{2}+\delta-s\right)$ edges. We also know that $C(\overline{\Gamma})-C(\Gamma)\geq(s-1)^{2}$ by the existence of $C_{i}$. Hence,
  \[
    (s-1)\left(\frac{n}{2}+\delta-s\right)\leq|\mathcal{E}'|<\frac{n}{2}-1-\delta^{2}-(s-1)^{2}\;.
  \]
  This inequality is equivalent to
  \[
    \left(\frac{n}{2}-1\right)(s-2)+\delta(\delta+s-1)<0\;,
  \]
  which is false since $s\geq2$, $n\geq2$ and $\delta\geq0$. If $|V_{i}|=2s+1$, $s\geq2$, and thus $|V'_{i}|=s$, $|V''_{i}|=s+1$ or $|V'_{i}|=s+1$, $|V''_{i}|=s$, then $\mathcal{E}'$ should contain at least $(s-1)\left(\frac{n}{2}+\delta-s-1\right)$ edges. In this case we know that $C(\overline{\Gamma})-C(\Gamma)\geq s(s-1)$. We find the inequality
  \[
    (s-1)\left(\frac{n}{2}+\delta-s-1\right)\leq|\mathcal{E}'|<\frac{n}{2}-1-\delta^{2}-s(s-1)\quad\Leftrightarrow\quad\left(\frac{n}{2}-1\right)(s-2)+\delta(\delta+s-1)<0\;,
  \]
  which is false. If $|V_{i}|=3$, then either $|V'_{i}|=1$, $|V''_{i}|=2$ or else $|V'_{i}|=2$, $|V''_{i}|=1$. In the first case $\mathcal{E}'$ should contain at least $\frac{n}{2}-\delta-1$ edges, in the second case at least $\frac{n}{2}+\delta-1$ edges. In general, $\mathcal{E}'$ should contain at least $\frac{n}{2}-\delta-1$ edges. We find the inequality
  \[
    \frac{n}{2}-\delta-1\leq|\mathcal{E}'|<\frac{n}{2}-1-\delta^{2}\quad\Leftrightarrow\quad\delta(\delta-1)<0\;,
  \]
  which is also false for all $\delta\in\N$. We conclude that $\Gamma$ does not contain maximal cliques of size at least three. Hence, $\Gamma$ is equal to $\overline{\Gamma}$ and the theorem follows.
  \par Secondly, we assume $n$ is odd. We proceed in the same way as in the case $n$ even. For $|V_{i}|=2s$ we find the inequality
  \begin{align*}
    (s-1)\left(\frac{n+1}{2}+\delta-s\right)\leq|\mathcal{E}'|&<\frac{n-1}{2}-1-\delta^{2}-\delta-(s-1)^{2}\\
    \Leftrightarrow\qquad\frac{n-1}{2}(s-2)+\delta(\delta+s)+1&<0\;.
  \end{align*}
  For $|V_{i}|=2s+1$, $s\geq2$, we find the inequality
  \begin{align*}
    (s-1)\left(\frac{n+1}{2}+\delta-s-1\right)\leq|\mathcal{E}'|&<\frac{n-1}{2}-1-\delta^{2}-\delta-s(s-1)\\
    \Leftrightarrow\qquad\frac{n-1}{2}(s-2)+\delta(\delta+s)+1&<0\;.
  \end{align*}
  For $|V_{i}|=3$ we again need to look at two different situations. Analogously, we find the inequality
  \[
    \frac{n-1}{2}-\delta-1\leq|\mathcal{E}'|<\frac{n-1}{2}-1-\delta^{2}-\delta\quad\Leftrightarrow\quad\delta(\delta-1)+\left(\frac{n-1}{2}-1\right)<0\;.
  \]
  In all three cases we find a contradiction.
\end{proof}

We describe a particular graph, whose existence shows that the bound in Lemma \ref{mainlemma} is sharp.

\begin{example}\label{sporadic}
   Let $n$ be odd. Consider the vertex sets $W_{1}$, $W_{2}$ and $W_{3}$, with $|W_{1}|=|W_{2}|=\frac{n-3}{2}$ and $W_{3}=\{x,y,z\}$. The sets $W'_{2}$ and $W''_{2}$ form a partition of $W_{2}$. The graph $G$ on the vertex set $W_{1}\cup W_{2}\cup W_{3}$ is defined by the following adjacencies. Any vertex of $W_{1}$ is adjacent to any vertex of $W_{2}\cup\{x\}$, the vertex $y$ is adjacent to all vertices of $W'_{2}\cup\{x\}$, the vertex $z$ is adjacent to all vertices of $W''_{2}\cup\{x\}$ and the vertices $y$ and $z$ are adjacent. This graph has $\frac{n^{2}-1}{4}-\frac{n-5}{2}$ edges. It contains one maximal clique of size $3$, namely $W_{3}$; all other maximal cliques are edges. Hence, $\sum^{n}_{i=1}k_{i}(i-1)=\frac{n^{2}-1}{4}-\frac{n-3}{2}$ with $k_{i}$ the number of maximal cliques with $i$ vertices in $G$.
\end{example}

\section{The classification result}
\label{sec:classification}

We give two examples of a small Kakeya set in $T^{*}_{2}(\mathcal{C})$.

\begin{example}\label{mainexample}
  We consider the linear representation $T^{*}_{2}(\mathcal{C})$ embedded in the projective space $\PG(3,q)$, with $\pi$ the plane containing the non-singular conic $\mathcal{C}$. We denote the points of $\mathcal{C}$ by $P_{0},\dots,P_{q}$. Let $\mathcal{Q}$ be a hyperbolic quadric in $\PG(3,q)$ meeting the plane $\pi$ in the conic $\mathcal{C}$, and let $\mathcal{R}$ and $\mathcal{R}'$ be the two reguli of $\mathcal{Q}$. So, through each point $P_{i}$ there is a unique line $\ell_{i}\in\mathcal{R}$ and a unique line $\ell'_{i}\in\mathcal{R}'$. We consider a partition of the points of $\mathcal{C}$ in two sets, without loss of generality $\{P_{0},\dots,P_{k-1}\}$ and $\{P_{k},\dots,P_{q}\}$, $0\leq k\leq q+1$. Let $\mathcal{L}$ be the Kakeya line set $\left(\cup^{k-1}_{i=0}\ell_{i}\right)\cup\left(\cup^{q}_{i=k}\ell'_{i}\right)$. Then, the corresponding Kakeya set $\mathcal{K}(\mathcal{L})$ covers
  \[
    kq+(q+1-k)(q-k)=\frac{3q^{2}+2q-1}{4}+\left(\frac{q+1}{2}-k\right)^{2}
  \]
  affine points.
  \par Note that $\mathcal{R}$ and $\mathcal{R}'$ can be interchanged, hence, it is sufficient to look at the examples with $0\leq k\leq\frac{q+1}{2}$. Also note that $\Gamma(\mathcal{L})$ is a complete bipartite graph.
\end{example}

\begin{example}\label{secondexample}
  We use the same notation as in the previous example. Now, we consider a partition of the points of $\mathcal{C}\setminus\{P_{q}\}$ in two sets, without loss of generality $\{P_{0},\dots,P_{k-1}\}$ and $\{P_{k},\dots,P_{q-1}\}$, $0\leq k\leq q$. Let $m$ be a secant line meeting $\mathcal{Q}$ in $P_{q}$ and a point on a line $\ell_{i}$, $0\leq i<k$ or a line $\ell'_{i}$, $k\leq i\leq q-1$. Let $\mathcal{L}$ be the Kakeya line set $\left(\cup^{k-1}_{i=0}\ell_{i}\right)\cup\left(\cup^{q-1}_{i=k}\ell'_{i}\right)\cup\{m\}$. Then, the corresponding Kakeya set $\mathcal{K}(\mathcal{L})$ covers
  \[
    kq+(q-k)(q-k)+(q-1)=\frac{3}{4}q^{2}+q-1+\left(\frac{q}{2}-k\right)^{2}
  \]
  affine points.
  \par Also here, we note that $\mathcal{R}$ and $\mathcal{R}'$ can be interchanged. Hence, it is sufficient to look at the examples with $0\leq k\leq\frac{q}{2}$. The graph $\Gamma(\mathcal{L})$ depends on the second intersection point of $m$ and $\mathcal{Q}$, the point $P_{q}$ being the first. If it is a point on a line $\ell_{i}$, $0\leq i<k$ but not on a line $\ell'_{i}$, $k\leq i\leq q-1$ (or vice versa), then $\Gamma(\mathcal{L})$ is a bipartite graph arising from a complete bipartite graph by removing all edges but one through a given vertex. If this second intersection point is a point $\ell_{i}\cap\ell'_{j}$, $0\leq i<k$ and $k\leq j\leq q-1$, then $\Gamma(\mathcal{L})$ is the graph constructed in Example \ref{sporadic} with $W'_{2}=W_{2}$.
\end{example}

In the proof of the main theorems we will use the following lemma which shows that a hyperbolic quadric is determined by a conic and two disjoint lines.

\begin{lemma}\label{quadricdefined}
  Let $C$ be a non-singular conic in a plane $\pi$ of $\PG(3,q)$ and let $m$ and $m'$ be two disjoint lines in $\PG(3,q)$, not in $\pi$, both meeting $C$. Then, there is a unique hyperbolic quadric containing $C$, $m$ and $m'$.
\end{lemma}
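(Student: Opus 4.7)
The plan is to construct $\mathcal{Q}$ explicitly via its two reguli and then argue that every hyperbolic quadric containing $C$, $m$, $m'$ must have the same two reguli. To set up notation, let $P=m\cap\pi$ and $P'=m'\cap\pi$; by hypothesis both lie on $C$, and they are distinct since $m$ and $m'$ are disjoint. Because $m$, $m'$ are skew, in any hyperbolic quadric containing both, they must lie in a common regulus $\mathcal{R}$, while the opposite regulus $\mathcal{R}'$ consists of transversals to $\mathcal{R}$. The strategy is therefore to pin down $\mathcal{R}'$ from $C$, $m$, $m'$ alone.

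For existence, pick three points $P_{1},P_{2},P_{3}$ of $C$ distinct from $P$ and $P'$ (available once $q$ is large enough; tiny $q$ can be checked by a direct coefficient count of quadrics containing $C$, $m$, $m'$). For each $i$, the plane $\langle P_{i},m\rangle$ meets $m'$ in a unique point, so there is a unique line $n_{i}$ through $P_{i}$ meeting both $m$ and $m'$, namely $n_{i}=\langle P_{i},m\rangle\cap\langle P_{i},m'\rangle$. I claim $n_{1},n_{2},n_{3}$ are pairwise skew: if $n_{i}\cap n_{j}=\{R\}$, then both lines lie in the line $\langle R,m\rangle\cap\langle R,m'\rangle$, forcing $n_{i}=n_{j}$; using that $n_{i}\cap\pi=\{P_{i}\}$ (otherwise $n_{i}\subseteq\pi$ would force $P,P',P_{i}$ collinear on $C$, contradicting non-singularity of $C$), this would give $P_{i}=P_{j}$. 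Three mutually skew lines determine a regulus $\mathcal{R}'$; the opposite regulus $\mathcal{R}$ consists of the common transversals of $n_{1},n_{2},n_{3}$, and by construction $m,m'\in\mathcal{R}$. Set $\mathcal{Q}=\bigcup\mathcal{R}\cup\bigcup\mathcal{R}'$. Since $\mathcal{Q}$ is non-singular, $\mathcal{Q}\cap\pi$ is a (possibly reducible) conic, and it contains the five points $P,P',P_{1},P_{2},P_{3}$ of $C$, no three of which are collinear; as a conic is determined by five points in general position, $\mathcal{Q}\cap\pi=C$, hence $C\subseteq\mathcal{Q}$.

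For uniqueness, let $\mathcal{Q}$ be any hyperbolic quadric containing $C$, $m$, $m'$. As noted, $m,m'$ share a regulus $\mathcal{R}$ of $\mathcal{Q}$. For each $P_{i}\in C\setminus\{P,P'\}$, the second line of $\mathcal{Q}$ through the point $P_{i}\in\mathcal{Q}$ lies in $\mathcal{R}'$, and being in $\mathcal{R}'$ it meets every line of $\mathcal{R}$ and in particular meets both $m$ and $m'$; by uniqueness of the transversal it is the line $n_{i}$ constructed above, depending only on $C,m,m'$. Hence $\mathcal{R}'$ contains all the lines $n_{i}$, and since any three of them already span the regulus, $\mathcal{R}'$ is determined by $C,m,m'$; so is $\mathcal{Q}$. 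The main technical step is the skewness of the $n_{i}$, which reduces to the general-position property of a non-singular conic; the only other care-point is verifying $\mathcal{Q}\cap\pi=C$ rather than some other conic, which uses the five-points-determine-a-conic principle. Small values of $q$, where fewer than three points $P_{i}$ are available, can be handled by a direct dimension count of the linear system of quadrics satisfying the nine independent conditions imposed by $C$, $m$ and $m'$.
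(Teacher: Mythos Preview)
Your argument is essentially correct and takes a genuinely different route from the paper. The paper proceeds by coordinates: it picks a projective frame adapted to $m$, $m'$ and the pole of $\langle m\cap C,\,m'\cap C\rangle$, writes the general quadric through $C$ as $X_{0}X_{1}+dX_{2}^{2}+X_{3}(a_{0}X_{0}+a_{1}X_{1}+a_{2}X_{2}+a_{3}X_{3})=0$, and solves the four linear conditions imposed by $m$ and $m'$ to get a unique solution, then checks by inspection that the resulting quadric is hyperbolic. Your proof is synthetic: you recover the opposite regulus $\mathcal{R}'$ directly as the transversals $n_{i}$ from points of $C\setminus\{P,P'\}$ to the skew pair $m,m'$, and use that three skew lines determine a regulus together with the five-points-determine-a-conic principle to identify $\mathcal{Q}\cap\pi$ with $C$.

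What each approach buys: the coordinate proof is shorter, uniform in $q$, and simultaneously establishes existence, uniqueness, and hyperbolicity in one stroke. Your synthetic argument is more conceptual and explains \emph{why} the quadric is forced---the opposite regulus is visibly reconstructed from the data---but it needs $|C|\geq 5$, i.e.\ $q\geq 4$, and you defer $q=2,3$ to an unspecified dimension count; since the lemma is invoked later precisely for $q=2,3,4$, this deferral is a genuine (if easily fillable) lacuna. One further small point: in your skewness argument the step ``both lines lie in the line $\langle R,m\rangle\cap\langle R,m'\rangle$'' tacitly assumes $R\notin m\cup m'$; the residual case $R\in m$ (or $R\in m'$) is handled by observing that then $n_{i},n_{j}\subset\langle R,m'\rangle$, whence $P_{i},P_{j},P'$ are collinear on $C$, a contradiction.
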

\begin{proof}
  Let $P_{1}$ be the point $m\cap C$ and $P_{2}$ be the point $m'\cap C$, and let $P_{3}$ be the intersection point of the tangent lines in $P_{1}$ and $P_{2}$ to the conic $C$, in the plane $\pi$. Let $P_{4}$ be a point $m\setminus\{P_{1}\}$ and let $P_{5}$ be a point $m\setminus\{P_{2}\}$. Now we choose a frame of $\PG(3,q)$ such that $P_{1}=(1,0,0,0)$, $P_{2}=(0,1,0,0)$, $P_{3}=(0,0,1,0)$, $P_{4}=(0,0,0,1)$ and $P_{5}=(1,1,1,1)$. The plane $\pi$ is given by $X_{3}=0$ and the non-singular conic $C$ is given by $\left(X_{3}=0\right)\cap\left(X_{0}X_{1}+dX^{2}_{2}=0\right)$ for some $d\in\F^{*}_{q}$.
  \par The equation of a quadric containing $C$ is $X_{0}X_{1}+dX^{2}_{2}+X_{3}(a_{0}X_{0}+a_{1}X_{1}+a_{2}X_{2}+a_{3}X_{3})=0$. All points on the line $m=\left\langle P_{1},P_{4}\right\rangle$ are on this quadric, hence $a_{0}=a_{3}=0$. Also all points on the line $m'=\left\langle P_{2},P_{5}\right\rangle$ are on this quadric, hence $a_{1}=-1$ and $a_{2}=-d$. Consequently, there is only one quadric containing $C$, $m$ and $m'$, namely the quadric given by the equation $X_{0}X_{1}+dX^{2}_{2}-X_{1}X_{3}-dX_{2}X_{3}=0$. This quadric is hyperbolic: its equation can be rewritten as $(X_{0}-X_{3})X_{1}+dX_{2}(X_{2}-X_{3})=0$. 
\end{proof}

The two following theorems present the main result of this article. A classification of the small Kakeya sets in $T^{*}_{2}(\mathcal{C})$ is proved.

\begin{theorem}\label{theoremodd}
  Let $\mathcal{L}$ be a Kakeya line set in $T^{*}_{2}(\mathcal{C})$, embedded in $\PG(3,q)$, $q$ odd, with $\mathcal{K}(\mathcal{L})$ its corresponding Kakeya set. If $|\mathcal{K}(\mathcal{L})|<\frac{3}{4}\left(q^{2}-1\right)+q$, then $\mathcal{K}(\mathcal{L})$ is a Kakeya set as described in Example \ref{mainexample}.
\end{theorem}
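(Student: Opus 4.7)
My plan is to convert the Kakeya bound to the hypothesis of Lemma~\ref{mainlemma}, and then use Lemma~\ref{quadricdefined} to produce the hyperbolic quadric whose two reguli contain the Kakeya lines.

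First, by Lemma~\ref{size} the hypothesis $|\mathcal{K}(\mathcal{L})| < \tfrac{3(q^{2}-1)}{4}+q$ is equivalent to $\sum_{i=1}^{q+1}k_{i}(i-1) > \tfrac{q^{2}+3}{4}$. Setting $n = q+1$, which is even since $q$ is odd, this is exactly the threshold $\tfrac{n^{2}}{4} - \tfrac{n}{2} + 1$ appearing in Lemma~\ref{mainlemma}(1). Hence $\Gamma(\mathcal{L})$ is bipartite with parts $\overline{V}_{1}, \overline{V}_{2}$ of sizes $\tfrac{q+1}{2}+\delta$ and $\tfrac{q+1}{2}-\delta$, arising from $K_{\overline{V}_{1},\overline{V}_{2}}$ by removing $\varepsilon - \delta^{2}$ edges, where $\varepsilon = \tfrac{(q+1)^{2}}{4} - \sum_{i=1}^{q+1}k_{i}(i-1)$. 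Both $\varepsilon$ and $\tfrac{q-1}{2}$ are integers, so the strict inequality $\varepsilon < \tfrac{q-1}{2}$ strengthens to $\varepsilon \leq \tfrac{q-3}{2}$. Any two lines in the same part $\overline{V}_{j}$ meet $\mathcal{C}$ in distinct points and share no affine point (being non-adjacent in $\Gamma$), so they are skew as projective lines of $\PG(3,q)$.

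Next, I would select two vertices of $\overline{V}_{1}$ adjacent to every vertex of $\overline{V}_{2}$. At most $\varepsilon - \delta^{2}$ vertices of $\overline{V}_{1}$ are incident with a missing edge, so at least $|\overline{V}_{1}| - (\varepsilon - \delta^{2}) \geq 2 + \delta + \delta^{2} \geq 2$ of them meet every line of $\overline{V}_{2}$. Pick two such lines $\ell_{1}, \ell_{2} \in \overline{V}_{1}$; they are disjoint, meet $\mathcal{C}$, and are not contained in $\pi_{\infty}$, so by Lemma~\ref{quadricdefined} there is a unique hyperbolic quadric $\mathcal{Q}$ containing $\mathcal{C}, \ell_{1}, \ell_{2}$. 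Denote by $\mathcal{R}$ the regulus of $\mathcal{Q}$ containing $\ell_{1}, \ell_{2}$, and by $\mathcal{R}'$ the opposite regulus. For any $m \in \overline{V}_{2}$, the intersections $\ell_{1} \cap m$ and $\ell_{2} \cap m$ are affine points (as $\ell_{i}$ and $m$ have different points at infinity) and are distinct (otherwise $\ell_{1}, \ell_{2}$ would be adjacent in $\Gamma$); together with $P_{m} \in \mathcal{C} \subseteq \mathcal{Q}$ they give three distinct points of $m \cap \mathcal{Q}$, forcing $m \subseteq \mathcal{Q}$. Since $m$ meets $\ell_{1} \in \mathcal{R}$, it must lie in $\mathcal{R}'$; thus $\overline{V}_{2} \subseteq \mathcal{R}'$.

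Finally, I would show $\overline{V}_{1} \subseteq \mathcal{R}$. Suppose some $\ell \in \overline{V}_{1}$ is not contained in $\mathcal{Q}$. Then $\ell \cap \mathcal{Q}$ consists of $P_{\ell}$ and at most one further (necessarily affine) point $Q$. Any line $m \in \mathcal{R}'$ meeting $\ell$ at an affine point $R$ satisfies $R \in \ell \cap \mathcal{Q}$, so $R = Q$ and $m$ must be the unique line of $\mathcal{R}'$ through $Q$. Consequently $\ell$ is adjacent in $\Gamma$ to at most one vertex of $\overline{V}_{2} \subseteq \mathcal{R}'$, so at least $|\overline{V}_{2}| - 1 = \tfrac{q-1}{2} - \delta$ edges incident with $\ell$ are missing. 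This cannot exceed the total budget $\varepsilon - \delta^{2} \leq \tfrac{q-3}{2} - \delta^{2}$; the resulting inequality is $\delta^{2} - \delta + 1 \leq 0$, which holds for no $\delta \in \N$. Hence every $\ell \in \overline{V}_{1}$ lies on $\mathcal{Q}$, and being disjoint from $\ell_{1} \in \mathcal{R}$ excludes $\mathcal{R}'$, so $\ell \in \mathcal{R}$. Therefore $\mathcal{L} \subseteq \mathcal{R} \cup \mathcal{R}'$ and $\mathcal{K}(\mathcal{L})$ is a Kakeya set as in Example~\ref{mainexample}. I expect the main obstacle to be this last degree accounting, which hinges on the integer-valued strengthening $\varepsilon \leq \tfrac{q-3}{2}$ of the strict hypothesis.
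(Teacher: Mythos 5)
Your proposal is correct and follows essentially the same route as the paper: reduce via Lemma~\ref{size} to the hypothesis of Lemma~\ref{mainlemma}(1), locate two vertices of the large part adjacent to all of the small part, invoke Lemma~\ref{quadricdefined} to get the hyperbolic quadric, place $\overline{V}_{2}$ in one regulus, and rule out a line of $\overline{V}_{1}$ off the quadric by the same missing-edge count (the paper closes that count with $\delta^{2}\geq\delta$ rather than your integrality strengthening $\varepsilon\leq\frac{q-3}{2}$, but both work). Your write-up is in fact somewhat more explicit than the paper's at the steps where lines are shown to lie on $\mathcal{Q}$.
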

\begin{proof}
  In Definition \ref{graphconstruction} we defined the graph $\Gamma(\mathcal{L})$ corresponding to the Kakeya line set $\mathcal{L}$. Denote its number of maximal cliques with $i$ vertices by $k_{i}$. By Lemma \ref{size} and the assumption we know that
  \[
    q(q+1)-\sum^{q+1}_{i=1}k_{i}(i-1)=|\mathcal{K}(\mathcal{L})|<\frac{3}{4}\left(q^{2}-1\right)+q
  \]
  and thus
  \[
    \sum^{q+1}_{i=1}k_{i}(i-1)>\frac{(q+1)^{2}}{4}-\frac{q+1}{2}+1\;.
  \]
  Since $\Gamma(\mathcal{L})$ is a graph on $q+1$ vertices with edge-disjoint maximal cliques (by Lemma \ref{edgedisjointmaxcliques}), we can apply Lemma \ref{mainlemma}(1.). Since $q$ is odd the number of vertices of $\Gamma(\mathcal{L})$ is even.
  \par We denote $\frac{(q+1)^{2}}{4}-\sum^{q+1}_{i=1}k_{i}(i-1)$ by $\varepsilon$. Now, we know that $\Gamma(\mathcal{L})$ arises from a complete bipartite graph $K_{\frac{q+1}{2}+\delta,\frac{q+1}{2}-\delta}$ by removing $\varepsilon-\delta^{2}$ edges, for some $\delta\leq\sqrt{\varepsilon}$, $\delta\in\N$. Let $V_{1}$ be the set of vertices of the partition containing $\frac{q+1}{2}+\delta$ vertices and let $\mathcal{L}_{1}$ be the set of lines corresponding to it; let $V_{2}$ be the set of vertices of the partition containing $\frac{q+1}{2}-\delta$ vertices and let $\mathcal{L}_{2}$ be the set of lines corresponding to it. Since the number of edges removed from the complete bipartite graph equals $\varepsilon-\delta^{2}\leq\varepsilon<\frac{q+1}{2}-1$, we know there are two vertices in $V_{1}$ that are adjacent to all vertices in $V_{2}$, hence there are two lines, say $\ell_{0}$ and $\ell_{1}$, in $\mathcal{L}_{1}$ meeting all lines of $\mathcal{L}_{2}$.
  \par The lines $\ell_{0}$ and $\ell_{1}$ are disjoint. By Lemma \ref{quadricdefined} they define together with the non-singular conic $\mathcal{C}$ a unique hyperbolic quadric $\mathcal{Q}$. Let $\mathcal{R}$ be the regulus containing $\ell_{0}$ and $\ell_{1}$, and let $\mathcal{R}'$ be the opposite regulus. All lines in $\mathcal{L}_{2}$ meet the lines $\ell_{0}$ and $\ell_{1}$. Since they also contain a point of $\mathcal{C}$, they are lines on $\mathcal{Q}$, necessarily belonging to $\mathcal{R}'$.
  \par A line of $\mathcal{L}_{1}$ contains a point of $\mathcal{C}$, and hence it is a tangent line to $\mathcal{Q}$, a secant line to $\mathcal{Q}$ or a line on $\mathcal{Q}$. So, it meets zero, one or all lines of $\mathcal{L}_{2}$. If $\mathcal{L}_{1}$ contains a line meeting at most one line of $\mathcal{L}_{2}$, then there is a vertex in $V_{1}$ adjacent to at most one vertex of $V_{2}$. Consequently, $\Gamma(\mathcal{L})$ arises from the complete bipartite graph $K_{V_{1},V_{2}}$ by removing at least $\frac{q+1}{2}-\delta-1$ edges. However, we know that the number of removed edges equals $\varepsilon-\delta^{2}<\frac{q+1}{2}-\delta-1$, a contradiction. So, all lines of $\mathcal{L}_{1}$ are lines $\mathcal{Q}$ meeting all lines of $\mathcal{L}_{2}$, and therefore all lines of $\mathcal{L}_{1}$ belong to $\mathcal{R}$.
  \par We conclude that $\mathcal{K}(\mathcal{L})$ is a Kakeya set as described in Example \ref{mainexample}.
\end{proof}

\begin{theorem}\label{theoremeven}
  Let $\mathcal{L}$ be a Kakeya line set in $T^{*}_{2}(\mathcal{C})$, embedded in $\PG(3,q)$, $q$ even, with $\mathcal{K}(\mathcal{L})$ its corresponding Kakeya set. If $|\mathcal{K}(\mathcal{L})|<\frac{3}{4}q^{2}+q-1$, then $\mathcal{K}(\mathcal{L})$ is a Kakeya set as described in Example \ref{mainexample}.
\end{theorem}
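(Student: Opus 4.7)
The plan is to mirror the proof of Theorem \ref{theoremodd} step by step, invoking case (2.) of Lemma \ref{mainlemma} in place of case (1.) since the vertex count $n = q+1$ of $\Gamma(\mathcal{L})$ is now odd. First, I would translate the hypothesis $|\mathcal{K}(\mathcal{L})| < \frac{3}{4}q^{2} + q - 1$ through Lemma \ref{size} into
\[
  \sum^{q+1}_{i=1} k_{i}(i-1) > \frac{q^{2}}{4} + 1,
\]
which matches the threshold $\frac{(q+1)^{2}-1}{4} - \frac{(q+1)-1}{2} + 1$ appearing in Lemma \ref{mainlemma}(2.). Combining this with the edge-disjointness of maximal cliques (Lemma \ref{edgedisjointmaxcliques}), Lemma \ref{mainlemma}(2.) then yields that $\Gamma(\mathcal{L})$ is bipartite and arises from a complete bipartite graph $K_{\frac{q+2}{2}+\delta,\frac{q}{2}-\delta}$ by removing $\varepsilon - \delta^{2} - \delta$ edges, for some $\delta \in \N$, where $\varepsilon = \frac{q^{2}+2q}{4} - \sum k_{i}(i-1) < \frac{q}{2} - 1$. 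I would denote the larger part by $V_{1}$ (with $|V_{1}| = \frac{q+2}{2} + \delta$) and the smaller by $V_{2}$, with associated line sets $\mathcal{L}_{1}, \mathcal{L}_{2}$.

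The second step is to extract two disjoint lines $\ell_{0}, \ell_{1} \in \mathcal{L}_{1}$ meeting every line of $\mathcal{L}_{2}$, and use them to recover a hyperbolic quadric. Since the total number of removed edges is strictly less than $\frac{q}{2} - 1$, while $|V_{1}| \geq \frac{q+2}{2}$, a direct vertex count guarantees at least two vertices of $V_{1}$ incident to no removed edge, i.e., adjacent to all of $V_{2}$. The corresponding lines $\ell_{0}, \ell_{1}$ are non-adjacent in $\Gamma(\mathcal{L})$ (same part of the bipartition), hence disjoint in $\PG(3,q)$, and both meet $\mathcal{C}$. Lemma \ref{quadricdefined} then supplies a unique hyperbolic quadric $\mathcal{Q}$ containing $\mathcal{C}, \ell_{0}, \ell_{1}$, with reguli $\mathcal{R}$ (containing $\ell_{0}, \ell_{1}$) and $\mathcal{R}'$. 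Each line of $\mathcal{L}_{2}$ meets $\mathcal{Q}$ in three distinct points (on $\ell_{0}$, on $\ell_{1}$, and on $\mathcal{C}$), hence lies on $\mathcal{Q}$; meeting lines of $\mathcal{R}$, it must belong to $\mathcal{R}'$.

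It remains to show every line of $\mathcal{L}_{1}$ also lies on $\mathcal{Q}$ (and hence in $\mathcal{R}$); this is the step I expect to be the most delicate, because of the extra $-\delta$ term in the even-case formula. A line of $\mathcal{L}_{1}$ meeting at most one line of $\mathcal{L}_{2}$ would be a tangent or secant of $\mathcal{Q}$ and would force its vertex in $V_{1}$ to account for at least $\frac{q}{2} - \delta - 1$ removed edges, giving $\varepsilon - \delta^{2} - \delta \geq \frac{q}{2} - \delta - 1$, i.e., $\varepsilon \geq \frac{q}{2} - 1 + \delta^{2}$, contradicting $\varepsilon < \frac{q}{2} - 1$ for every $\delta \in \N$. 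Hence every line of $\mathcal{L}_{1}$ is a line of $\mathcal{Q}$ meeting every line of $\mathcal{R}'$, forcing membership in $\mathcal{R}$. Consequently $\mathcal{L}$ splits as $\mathcal{R}$-lines through some points of $\mathcal{C}$ together with $\mathcal{R}'$-lines through the remaining points, matching Example \ref{mainexample}. The only substantive check, apart from parity bookkeeping, is the final inequality above: it closes with room to spare, confirming that the even-case argument is strictly parallel to the odd one.
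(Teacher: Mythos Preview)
Your proposal is correct and follows essentially the same route as the paper's proof: translate the size bound via Lemma~\ref{size}, invoke Lemma~\ref{mainlemma}(2.) together with Lemma~\ref{edgedisjointmaxcliques} to force $\Gamma(\mathcal{L})$ to be nearly complete bipartite, use a pigeonhole count to find two lines of $\mathcal{L}_{1}$ meeting all of $\mathcal{L}_{2}$, build the hyperbolic quadric through them and $\mathcal{C}$ with Lemma~\ref{quadricdefined}, and then rule out secant or tangent lines in $\mathcal{L}_{1}$ by the inequality $\varepsilon-\delta^{2}-\delta<\tfrac{q}{2}-\delta-1$. The paper's argument is the same, only slightly terser; your explicit check that $\varepsilon\geq\tfrac{q}{2}-1+\delta^{2}$ contradicts $\varepsilon<\tfrac{q}{2}-1$ for every $\delta\in\N$ is exactly the contradiction the paper uses.
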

\begin{proof}
  We proceed as in the proof of Theorem \ref{theoremodd}. We look at the graph $\Gamma(\mathcal{L})$ on $q+1$ vertices. By Lemma \ref{size} and the assumption we know
  \[
    q(q+1)-\sum^{q+1}_{i=1}k_{i}(i-1)=|\mathcal{K}(\mathcal{L})|<\frac{3}{4}q^{2}+q-1\;,
  \]
  hence
  \[
    \sum^{q+1}_{i=1}k_{i}(i-1)>\frac{(q+1)^{2}-1}{4}-\frac{(q+1)-1}{2}+1\;.
  \]
  We denote $\frac{(q+1)^{2}-1}{4}-\sum^{q+1}_{i=1}k_{i}(i-1)$ by $\varepsilon$. Applying Lemma \ref{mainlemma}(2.) we know now that $\Gamma(\mathcal{L})$ arises from a complete bipartite graph $K_{\frac{q}{2}+1+\delta,\frac{q}{2}-\delta}$ by removing $\varepsilon-\delta^{2}-\delta$ edges, $\delta\leq\sqrt{\varepsilon+\frac{1}{4}}-\frac{1}{2}$, $\delta\in\N$. Let $V_{1}$ be the set of vertices of the partition containing $\frac{q}{2}+1+\delta$ vertices and let $\mathcal{L}_{1}$ be the set of lines corresponding to it; let $V_{2}$ be the set of vertices of the partition containing $\frac{q}{2}-\delta$ vertices and let $\mathcal{L}_{2}$ be the set of lines corresponding to it. Since the number of edges removed from the complete bipartite graph equals $\varepsilon-\delta^{2}-\delta\leq\varepsilon<\frac{q}{2}-1$, we know also in this cases there are two vertices in $V_{1}$ that are adjacent to all vertices in $V_{2}$, hence there are two lines, say $\ell_{0}$ and $\ell_{1}$, in $\mathcal{L}_{1}$ meeting all lines of $\mathcal{L}_{2}$.
  \par Now, we introduce the hyperbolic quadric $\mathcal{Q}$ and the reguli $\mathcal{R}$ and $\mathcal{R}'$ as in the proof of Theorem \ref{theoremodd}. We know that all lines of $\mathcal{L}_{2}$ belong to $\mathcal{R}'$. We also know that the lines of $\mathcal{L}_{1}$ meet zero, one or all lines of $\mathcal{L}_{2}$.
  \par If there is a line in $\mathcal{L}_{1}$ meeting at most one line of $\mathcal{L}_{2}$, then $\Gamma(\mathcal{L})$ arises from the complete bipartite graph $K_{V_{1},V_{2}}$ by removing at least $\frac{q}{2}-\delta-1$ edges. We know however that the number of removed edges equals $\varepsilon-\delta^{2}-\delta<\frac{q}{2}-\delta-1$. So, we find a contradiction. As in the proof of Theorem \ref{theoremodd}, we conclude that $\mathcal{K}(\mathcal{L})$ is a Kakeya set as described in Example \ref{mainexample}.
\end{proof}

We look at the previous results in some more detail.

\begin{remark}\label{analysis}
  We first consider the case $q$ odd. We know that all small Kakeya sets in $T^{*}_{2}(\mathcal{C})$ are of the type described Example \ref{mainexample}. Among them the smallest example, corresponding to $k=\frac{q+1}{2}$, has $\gamma(q)=\frac{3q^{2}+2q-1}{4}$ points. The corresponding Kakeya line set has $\frac{q+1}{2}$ lines in each regulus.
  \par Theorem \ref{theoremodd} allows us in general to classify the $\left\lceil\sqrt{\frac{q-1}{2}}\right\rceil$ smallest types of Kakeya sets in $T^{*}_{2}(\mathcal{C})$. The second-smallest example has $\gamma(q)+1$ points. It arises from a Kakeya line set with $\frac{q+3}{2}$ lines in one regulus and $\frac{q-1}{2}$ lines in the opposite regulus. In general the $(m+1)$-smallest example has $\gamma(q)+m^{2}$ points and arises from a Kakeya line set with $\frac{q+1}{2}+m$ lines in one regulus and $\frac{q+1}{2}-m$ lines in the opposite regulus, $m\leq\left\lceil\sqrt{\frac{q-1}{2}}\right\rceil-1$. This corresponds to the construction in Example \ref{mainexample} with parameter $k=\frac{q+1}{2}-m$.
  \par Note that the result in Theorem \ref{theoremodd} is sharp. The Kakeya set of the type described in Example \ref{secondexample}, with parameter $k=\frac{q-1}{2}$, contains precisely $\frac{3}{4}\left(q^{2}-1\right)+q$ points.
  \par Now we consider the case $q$ even. We know that all small Kakeya sets in $T^{*}_{2}(\mathcal{C})$ are of the type described Example \ref{mainexample}. Among them the smallest example, corresponding with $k=\frac{q}{2}$, has $\gamma(q)=\frac{3q^{2}+2q}{4}=\frac{3}{4}q^{2}+\frac{1}{2}q$ points. The corresponding Kakeya line set has $\frac{q}{2}+1$ lines in one regulus and $\frac{q}{2}$ in the opposite.
  \par Theorem \ref{theoremeven} allows us in general to classify the $\left\lceil\sqrt{\frac{q}{2}-\frac{3}{4}}-\frac{1}{2}\right\rceil$ smallest types of Kakeya sets in $T^{*}_{2}(\mathcal{C})$. The second-smallest example has $\gamma(q)+2$ points. It arises from a Kakeya line set with $\frac{q}{2}+2$ lines in one regulus and $\frac{q}{2}-1$ lines in the opposite regulus. In general the $m$-smallest example has $\gamma(q)+m(m-1)$ points and arises from a Kakeya line set with $\frac{q}{2}+m$ lines in one regulus and $\frac{q}{2}-m+1$ lines in the opposite regulus, $m\leq\left\lceil\sqrt{\frac{q}{2}-\frac{3}{4}}-\frac{1}{2}\right\rceil$. This corresponds to the construction in Example \ref{mainexample} with parameter $k=\frac{q}{2}+1-m$.
  \par Note that the result in Theorem \ref{theoremeven} is sharp. The Kakeya set of the type described in Example \ref{secondexample}, with parameter $k=\frac{q}{2}$, contains precisely $\frac{3}{4}q^{2}+q-1$ points.
\end{remark}

For small $q$ values, the results from Theorem \ref{theoremeven}, Theorem \ref{theoremodd} and Remark \ref{analysis} give little information. For $q=2$ it yields no classification result, for $q=3,4$ only the smallest example has been classified. For $q\geq5$ at least the smallest and the second-smallest example have been classified. Therefore we study in the next remarks the small $q$ values, namely $q=2,3,4$. Given a Kakeya line set $\mathcal{L}$, we know the size of $\mathcal{K}(\mathcal{L})$ through the graph $\Gamma(\mathcal{L})$ by Lemma \ref{size}. Therefore, we study the different possibilities for $\Gamma(\mathcal{L})$.

\begin{remark}
  We first look at the case $q=2$. We already noted that the previous theorems do not yield a classification result in this case. In this remark however we will present a classification of all Kakeya sets in the case $q=2$. A Kakeya line set $\mathcal{L}$ contains $3$ lines, hence corresponds to a simple graphs on $3$ vertices. There are four nonisomorphic simple graphs on $3$ vertices.
  \begin{figure}[!ht]
    \centering
    \begin{tikzpicture}[scale=0.5]
      \draw [fill] (0,0) circle [radius=0.1];
      \draw [fill] (1,1.73) circle [radius=0.1];
      \draw [fill] (2,0) circle [radius=0.1];
      \node at (0,1.5) {$\textarc{f}_{1}$};
    
      \draw [thick] (6,0) -- (8,0);
      \draw [fill] (6,0) circle [radius=0.1];
      \draw [fill] (7,1.73) circle [radius=0.1];
      \draw [fill] (8,0) circle [radius=0.1];
      \node at (6,1.5) {$\textarc{f}_{2}$};
    
      \draw [thick] (12,0) -- (13,1.73);
      \draw [thick] (14,0) -- (13,1.73);
      \draw [fill] (12,0) circle [radius=0.1];
      \draw [fill] (13,1.73) circle [radius=0.1];
      \draw [fill] (14,0) circle [radius=0.1];
      \node at (12,1.5) {$\textarc{f}_{3}$};
    
      \draw [thick] (18,0) -- (20,0);
      \draw [thick] (18,0) -- (19,1.73);
      \draw [thick] (20,0) -- (19,1.73);
      \draw [fill] (18,0) circle [radius=0.1];
      \draw [fill] (19,1.73) circle [radius=0.1];
      \draw [fill] (20,0) circle [radius=0.1];
      \node at (18,1.5) {$\textarc{f}_{4}$};
    \end{tikzpicture}
    \caption{The simple graphs on 3 vertices.}
    \label{graphs3}
  \end{figure}
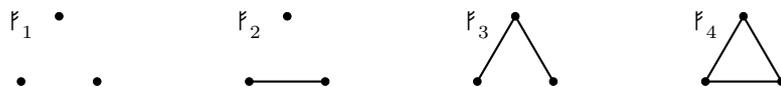
  \par In each of the graphs in Figure \ref{graphs3}, the maximal cliques are edge-disjoint. By Lemma \ref{size} the Kakeya sets corresponding to the graphs $\textarc{f}_{3}$ and $\textarc{f}_{4}$ have size $4$, the Kakeya sets corresponding to the graph $\textarc{f}_{2}$ have size $5$ and the Kakeya sets corresponding to the graph $\textarc{f}_{1}$ have size $6$. Recall that the structure of the graph gives directly a description of the Kakeya line set.
  \begin{itemize}
    \item A Kakeya line set $\mathcal{L}$ whose graph $\Gamma(\mathcal{L})$ equals $\textarc{f}_{4}$, is a cone, a set of three lines through a common affine point. There is a unique hyperbolic quadric in $\PG(3,2)$ containing the conic in the plane at infinity and two of these three lines. So, any such cone of three lines through a common affine point can also be seen as a Kakeya line set described in Example \ref{secondexample} with $k=1$ and $m$ passing through $\ell_{0}\cap\ell'_{1}$.
    \item A Kakeya line set $\mathcal{L}$ whose graph $\Gamma(\mathcal{L})$ equals $\textarc{f}_{3}$, consists of two disjoint lines, and a third line meeting both. By Lemma \ref{quadricdefined} we know that we can find a hyperbolic quadric containing the conic in the plane at infinity and the two disjoint lines. Hence, any Kakeya line set $\mathcal{L}$ whose graph $\Gamma(\mathcal{L})$ equals $\textarc{f}_{3}$ can be described by Example \ref{mainexample}, with $k=1$.
    \item A Kakeya line set $\mathcal{L}$ whose graph $\Gamma(\mathcal{L})$ equals $\textarc{f}_{2}$, consists of two intersecting lines, and a third line meeting none. Again by Lemma \ref{quadricdefined} we can see that any such Kakeya line set can be described by Example \ref{secondexample}, with $k=0$.
    \item A Kakeya line set $\mathcal{L}$ whose graph $\Gamma(\mathcal{L})$ equals $\textarc{f}_{1}$, consists of three disjoint lines. These three lines determine a unique hyperbolic quadric, which necessarily meets $\pi_{\infty}$ in the conic $\mathcal{C}$. Hence, this is a Kakeya line set described in Example \ref{mainexample}, with $k=0$.
  \end{itemize}
  We observe that all Kakeya sets for $q=2$ are described by Examples \ref{mainexample} and \ref{secondexample}.
\end{remark}

\begin{remark}
  Now, we look at the case $q=3$. Then a Kakeya line set $\mathcal{L}$ contains $4$ lines. We already classified the smallest Kakeya sets for $q=3$ in Theorem \ref{theoremodd}: they have size $8$, arise from the construction in Example \ref{mainexample} with $k=2$. In this remark we classify all Kakeya sets of size $9$. Recall that the structure of a Kakeya line set $\mathcal{L}$ is described by its graph $\Gamma(\mathcal{L})$. There are $11$ nonisomorphic simple graphs on $4$ vertices. One of them has two maximal cliques of size $3$ which have an edge in common, so cannot be a graph $\Gamma(\mathcal{L})$. The ten other graphs are presented in Figure \ref{graphs4}.
  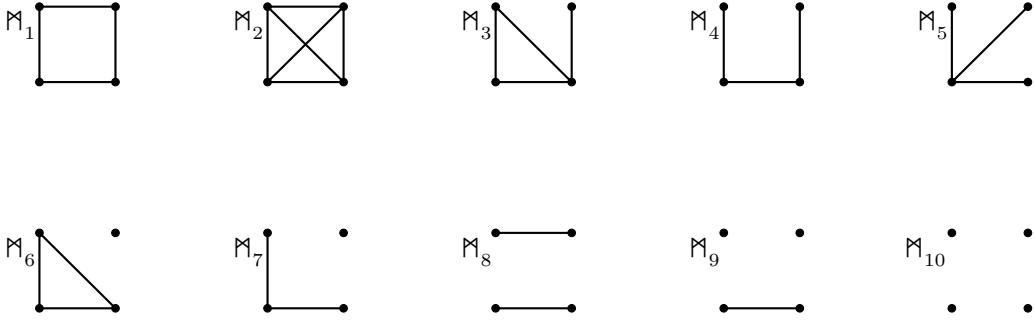
\begin{figure}[!ht]
    \centering
    \begin{tikzpicture}[scale=0.5]
      \draw [thick] (0,0) -- (2,0);
      \draw [thick] (2,0) -- (2,2);
      \draw [thick] (2,2) -- (0,2);
      \draw [thick] (0,2) -- (0,0);
      \draw [fill] (0,0) circle [radius=0.1];
      \draw [fill] (2,0) circle [radius=0.1];
      \draw [fill] (0,2) circle [radius=0.1];
      \draw [fill] (2,2) circle [radius=0.1];
      \node at (-0.5,1.5) {$\textarc{m}_{1}$};
      
      \draw [thick] (6,0) -- (8,0);
      \draw [thick] (8,0) -- (8,2);
      \draw [thick] (8,2) -- (6,2);
      \draw [thick] (6,2) -- (6,0);
      \draw [thick] (6,0) -- (8,2);
      \draw [thick] (6,2) -- (8,0);
      \draw [fill] (6,0) circle [radius=0.1];
      \draw [fill] (8,0) circle [radius=0.1];
      \draw [fill] (6,2) circle [radius=0.1];
      \draw [fill] (8,2) circle [radius=0.1];
      \node at (5.5,1.5) {$\textarc{m}_{2}$};
      
      \draw [thick] (12,0) -- (14,0);
      \draw [thick] (14,0) -- (14,2);
      \draw [thick] (12,2) -- (12,0);
      \draw [thick] (12,2) -- (14,0);
      \draw [fill] (12,0) circle [radius=0.1];
      \draw [fill] (14,0) circle [radius=0.1];
      \draw [fill] (12,2) circle [radius=0.1];
      \draw [fill] (14,2) circle [radius=0.1];
      \node at (11.5,1.5) {$\textarc{m}_{3}$};
      
      \draw [thick] (18,0) -- (20,0);
      \draw [thick] (20,0) -- (20,2);
      \draw [thick] (18,2) -- (18,0);
      \draw [fill] (18,0) circle [radius=0.1];
      \draw [fill] (20,0) circle [radius=0.1];
      \draw [fill] (18,2) circle [radius=0.1];
      \draw [fill] (20,2) circle [radius=0.1];
      \node at (17.5,1.5) {$\textarc{m}_{4}$};
      
      \draw [thick] (24,0) -- (26,0);
      \draw [thick] (24,2) -- (24,0);
      \draw [thick] (24,0) -- (26,2);
      \draw [fill] (24,0) circle [radius=0.1];
      \draw [fill] (26,0) circle [radius=0.1];
      \draw [fill] (24,2) circle [radius=0.1];
      \draw [fill] (26,2) circle [radius=0.1];
      \node at (23.5,1.5) {$\textarc{m}_{5}$};
      
      \draw [thick] (0,-6) -- (2,-6);
      \draw [thick] (0,-4) -- (0,-6);
      \draw [thick] (2,-6) -- (0,-4);
      \draw [fill] (0,-6) circle [radius=0.1];
      \draw [fill] (2,-6) circle [radius=0.1];
      \draw [fill] (0,-4) circle [radius=0.1];
      \draw [fill] (2,-4) circle [radius=0.1];
      \node at (-0.5,-4.5) {$\textarc{m}_{6}$};
      
      \draw [thick] (6,-6) -- (8,-6);
      \draw [thick] (6,-4) -- (6,-6);
      \draw [fill] (6,-6) circle [radius=0.1];
      \draw [fill] (8,-6) circle [radius=0.1];
      \draw [fill] (6,-4) circle [radius=0.1];
      \draw [fill] (8,-4) circle [radius=0.1];
      \node at (5.5,-4.5) {$\textarc{m}_{7}$};
      
      \draw [thick] (12,-6) -- (14,-6);
      \draw [thick] (14,-4) -- (12,-4);
      \draw [fill] (12,-6) circle [radius=0.1];
      \draw [fill] (14,-6) circle [radius=0.1];
      \draw [fill] (12,-4) circle [radius=0.1];
      \draw [fill] (14,-4) circle [radius=0.1];
      \node at (11.5,-4.5) {$\textarc{m}_{8}$};
      
      \draw [thick] (18,-6) -- (20,-6);
      \draw [fill] (18,-6) circle [radius=0.1];
      \draw [fill] (20,-6) circle [radius=0.1];
      \draw [fill] (18,-4) circle [radius=0.1];
      \draw [fill] (20,-4) circle [radius=0.1];
      \node at (17.5,-4.5) {$\textarc{m}_{9}$};
      
      \draw [fill] (24,-6) circle [radius=0.1];
      \draw [fill] (26,-6) circle [radius=0.1];
      \draw [fill] (24,-4) circle [radius=0.1];
      \draw [fill] (26,-4) circle [radius=0.1];
      \node at (23.3,-4.5) {$\textarc{m}_{10}$};
    \end{tikzpicture}
    \caption{The simple graphs on 4 vertices with edge-disjoint maximal cliques.}
    \label{graphs4}
  \end{figure}
  \par The graph $\textarc{m}_{1}$ corresponds to a Kakeya set of size $8$, the graphs $\textarc{m}_{i}$, $i=2,\dots,5$, correspond to a Kakeya set of size $9$, the graphs $\textarc{m}_{i}$, $i=6,7,8$, correspond to a Kakeya set of size $10$, the graph $\textarc{m}_{9}$ corresponds to a Kakeya set of size $11$, and the graph $\textarc{m}_{10}$ corresponds to a Kakeya set of size $12$.
  \par The smallest Kakeya sets for $q=3$, which have size $8$, correspond to the graph $\textarc{m}_{1}$. We look at the second-smallest Kakeya sets, those of size $9$.
  \begin{itemize}
    \item A Kakeya line set $\mathcal{L}$ whose graph $\Gamma(\mathcal{L})$ equals $\textarc{m}_{2}$, is a cone, the set of all four lines in $T^{*}_{2}(\mathcal{C})$ through a common affine point.
    \item A Kakeya line set $\mathcal{L}$ whose graph $\Gamma(\mathcal{L})$ equals $\textarc{m}_{3}$, consists of three lines through a common affine point and a line meeting one of these three lines in a different point. E.g. the Kakeya line set described in Example \ref{secondexample} with $k=1$ and $m$ passing through $\ell_{0}\cap\ell'_{2}$ yields this graph. Moreover, by Lemma \ref{quadricdefined} we can see that any Kakeya line set $\mathcal{L}$ whose graph $\Gamma(\mathcal{L})$ equals $\textarc{m}_{3}$ can be described by Example \ref{secondexample} with $k=1$ and $m$ passing through $\ell_{0}\cap\ell'_{2}$.
    \item A Kakeya line set $\mathcal{L}$ whose graph $\Gamma(\mathcal{L})$ equals $\textarc{m}_{4}$, consists of two pairs of two intersecting lines, such that one line of the first pair meets precisely one line of the other pair and the other line of the first pair does not meet a line of the second pair. Again by Lemma \ref{quadricdefined} any such Kakeya line set can be described by Example \ref{secondexample}, with $k=1$ and $m$ passing through a point of $\ell'_{1}$ (or $\ell'_{2}$) not on $\ell_{0}$.
    \item A Kakeya line set $\mathcal{L}$ whose graph $\Gamma(\mathcal{L})$ equals $\textarc{m}_{5}$, consists of three disjoint lines, and a fourth line meeting all three. All these Kakeya line sets are described by Example \ref{mainexample}, with $k=1$.
  \end{itemize}
\end{remark}

\begin{remark}
  Finally, we look at the case $q=4$. Theorem \ref{theoremeven} characterizes the Kakeya sets in $T^{*}_{2}(\mathcal{C})$ of size 14, and we characterize the Kakeya sets of size 15 here. A Kakeya line set consists of 5 lines in this case. There are 34 nonisomorphic simple graphs on 5 vertices, 9 of which have maximal cliques that are not edge-disjoint. We mentioned above the value $C(G)=\sum^{q+1}_{i=1}k^{G}_{i}(i-1)$ for a graph $G$, with $k^{G}_{i}$ the number of maximal cliques with $i$ vertices of $G$. For each of the 25 remaining graphs we can calculate this value. There is one graph with $C(G)=0$ and one graph with $C(G)=1$. There are 3 graphs with $C(G)=2$, 6 graphs with $C(G)=3$ and 10 graphs with $C(G)=4$. In Figure \ref{graphs5} we show the three graphs with $C(G)=5$ and the one graph with $C(G)=6$. These are the only graphs that can correspond to Kakeya sets of size at most $15$ by Lemma \ref{size}.
  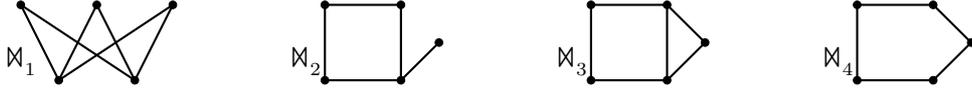
\begin{figure}[!ht]
    \centering
    \begin{tikzpicture}[scale=0.5]
      \draw [thick] (0,2) -- (1,0);
      \draw [thick] (0,2) -- (3,0);
      \draw [thick] (2,2) -- (1,0);
      \draw [thick] (2,2) -- (3,0);
      \draw [thick] (4,2) -- (1,0);
      \draw [thick] (4,2) -- (3,0);
      \draw [fill] (0,2) circle [radius=0.1];
      \draw [fill] (1,0) circle [radius=0.1];
      \draw [fill] (2,2) circle [radius=0.1];
      \draw [fill] (3,0) circle [radius=0.1];
      \draw [fill] (4,2) circle [radius=0.1];
      \node at (0,0.5) {$\textarc{d}_{1}$};
    
      \draw [thick] (8,0) -- (8,2);
      \draw [thick] (8,2) -- (10,2);
      \draw [thick] (10,2) -- (10,0);
      \draw [thick] (10,0) -- (8,0);
      \draw [thick] (10,0) -- (11,1);
      \draw [fill] (8,0) circle [radius=0.1];
      \draw [fill] (8,2) circle [radius=0.1];
      \draw [fill] (10,0) circle [radius=0.1];
      \draw [fill] (10,2) circle [radius=0.1];
      \draw [fill] (11,1) circle [radius=0.1];
      \node at (7.5,0.5) {$\textarc{d}_{2}$};
    
      \draw [thick] (15,0) -- (15,2);
      \draw [thick] (15,2) -- (17,2);
      \draw [thick] (17,2) -- (17,0);
      \draw [thick] (17,0) -- (15,0);
      \draw [thick] (17,0) -- (18,1);
      \draw [thick] (17,2) -- (18,1);
      \draw [fill] (15,0) circle [radius=0.1];
      \draw [fill] (15,2) circle [radius=0.1];
      \draw [fill] (17,0) circle [radius=0.1];
      \draw [fill] (17,2) circle [radius=0.1];
      \draw [fill] (18,1) circle [radius=0.1];
      \node at (14.5,0.5) {$\textarc{d}_{3}$};
    
      \draw [thick] (22,0) -- (22,2);
      \draw [thick] (22,2) -- (24,2);
      \draw [thick] (24,0) -- (22,0);
      \draw [thick] (24,0) -- (25,1);
      \draw [thick] (24,2) -- (25,1);
      \draw [fill] (22,0) circle [radius=0.1];
      \draw [fill] (22,2) circle [radius=0.1];
      \draw [fill] (24,0) circle [radius=0.1];
      \draw [fill] (24,2) circle [radius=0.1];
      \draw [fill] (25,1) circle [radius=0.1];
      \node at (21.5,0.5) {$\textarc{d}_{4}$};
    \end{tikzpicture}
    \caption{The simple graphs on 5 vertices that can correspond to Kakeya sets of size at most 15.}
    \label{graphs5}
  \end{figure}
  \par We already classified the smallest Kakeya sets for $q=4$: they have size $14$, arise from the construction in Example \ref{mainexample} with $k=2$, and correspond to the graph $\textarc{d}_{1}$. We look at the second-smallest Kakeya sets, those of size $15$. Recall once more that the graph describes the structure of the Kakeya line set.
  \begin{itemize}
    \item A Kakeya line set $\mathcal{L}$ whose graph $\Gamma(\mathcal{L})$ equals $\textarc{d}_{2}$ consists of five lines $m_{1},m_{2},m_{3},m'_{1},m'_{2}$ with $m_{1},m_{2},m_{3}$ pairwise disjoint, $m'_{1},m'_{2}$ disjoint, $m'_{1}$ meeting $m_{1}$, $m_{2}$ and $m_{3}$ and $m'_{2}$ meeting $m_{1}$ and $m_{2}$ but not $m_{3}$. By applying Lemma \ref{quadricdefined} we find a hyperbolic quadric containing $\mathcal{C}$ and the lines $m'_{1}$ and $m'_{2}$. It follows that $\mathcal{L}$ is a Kakeya line set that can be described by Example \ref{secondexample}, with $k=2$ and $m$ passing through a point of $\ell'_{2}$ or $\ell'_{3}$ not on $\ell_{0}$ or $\ell_{1}$ or through a point on $\ell_{0}$ or $\ell_{1}$ not on $\ell'_{2}$ or $\ell'_{3}$. Consequently, any Kakeya line set $\mathcal{L}$ whose graph $\Gamma(\mathcal{L})$ equals $\textarc{d}_{2}$ is given by Example \ref{secondexample} in the described way.
    \item It can be proved analogously, again using Lemma \ref{quadricdefined}, that any Kakeya line set $\mathcal{L}$ whose graph $\Gamma(\mathcal{L})$ equals $\textarc{d}_{3}$ can be seen as a Kakeya line set described in Example \ref{secondexample} with $k=2$ and $m$ passing through $\ell_{i}\cap\ell'_{j}$, $i=0,1$ and $j=2,3$.
    \item A Kakeya line set $\mathcal{L}$ whose graph $\Gamma(\mathcal{L})$ equals $\textarc{d}_{4}$, consists of five lines $m_{0},\dots,m_{4}$, such that $m_{i}$ meets $m_{i+1}$ and $m_{i+4}$, but not $m_{i+2}$ and $m_{i+3}$, $0\leq i\leq 4$, whereby the addition in the indices is taken modulo $5$. We will show that such a Kakeya line set cannot exist, hence the graph $\textarc{d}_{4}$ is not admissible as graph of a Kakeya line set.
    \par Let $P_{i}$ be the point $\mathcal{C}\cap m_{i}$ and let $\mathcal{Q}$ be the hyperbolic quadric defined by the disjoint lines $m_{0}$ and $m_{2}$ and the conic $\mathcal{C}$. Let $\mathcal{R}$ be the regulus of $\mathcal{Q}$ containing $m_{0}$, and let $\mathcal{R}'$ be the opposite regulus. We denote the line of $\mathcal{R}$ through $P_{i}$ by $\ell_{i}$ and the line of $\mathcal{R}'$ through $P_{i}$ by $\ell'_{i}$, $0\leq i\leq 4$. We know that $m_{0}=\ell_{0}$, $m_{1}=\ell'_{1}$ and $m_{2}=\ell_{2}$. We denote the point $\ell_{i}\cap\ell'_{j}$ by $P_{i,j}$.
    \par The lines $m_{3}$ and $m_{4}$ are both secants to the quadric $\mathcal{Q}$. Since $m_{3}$ and $m_{4}$ meet each other, $V=\left\langle m_{3},m_{4}\right\rangle$ is a plane through the line $\left\langle P_{3},P_{4}\right\rangle$. There are five planes through $\left\langle P_{3},P_{4}\right\rangle$, one of which is $\pi_{\infty}$. The four remaining planes $\pi_{1},\dots,\pi_{4}$ meet $\mathcal{Q}$ in the union of two lines $\ell_{3}\cup\ell'_{4}$, the union of two lines $\ell_{4}\cup\ell'_{3}$, the conic $\{P_{3},P_{4},P_{0,1},P_{1,2},P_{2,0}\}$ and the conic $\{P_{3},P_{4},P_{1,0},P_{2,1},P_{0,2}\}$, respectively. The line $m_{3}$ has to be a secant meeting the line $\ell_{2}$ and the line $m_{4}$ has to be a secant meeting the line $\ell_{0}$. In each of the four planes there is only one point on $\ell_{0}$ and one point on $\ell_{2}$. The line in $\pi_{1}$ through $P_{4}$ meeting $\ell_{0}$ is $\ell'_{4}$, which is not a secant line, a contradiction, so $V\neq\pi_{1}$. Analogously, starting from $P_{3}$, also $V\neq\pi_{2}$. The unique line in $\pi_{3}$ through $P_{4}$ meeting $\ell_{0}$ is $\left\langle P_{4},P_{0,1}\right\rangle$. However this line also meets $\ell'_{1}=m_{1}$, a contradiction, so $V\neq\pi_{3}$. Analogously, considering $\left\langle P_{3},P_{2,1}\right\rangle$, also $V\neq\pi_{4}$.
  \end{itemize}
  We conclude that there are only two types of Kakeya sets of size $15$ for $q=4$, both arising from the construction in Example \ref{secondexample}.
\end{remark}

\paragraph*{Acknowledgment:} The research of the author is supported by FWO-Vlaanderen (Research Foundation - Flanders) and by the BOF-UGent (Special Research Fund of Ghent University). We would like to thank Bert Seghers.

\end{document}